\numberwithin{equation}{section}
\theoremstyle{plain}
\newtheorem{theorem}{Theorem}[section]
\newtheorem{lemma}[theorem]{Lemma}
\theoremstyle{definition}
\theoremstyle{remark}
\newtheorem{remark}[theorem]{Remark}
\renewcommand{\Re}{\operatorname{Re}}
\newcommand{\sym}{\operatorname{sym}}
\newcommand{\GL}{\operatorname{GL}}
\newcommand{\dd}{\mathrm{d}}
\newcommand{\Tam}{\operatorname{Tam}}
   \DeclareFontFamily{U}{wncy}{}
    \DeclareFontShape{U}{wncy}{m}{n}{<->wncyr10}{}
    \DeclareSymbolFont{mcy}{U}{wncy}{m}{n}
    \DeclareMathSymbol{\Sh}{\mathord}{mcy}{"58}
\def\@tocline#1#2#3#4#5#6#7{\relax
  \ifnum #1>\c@tocdepth % then omit
  \else
    \par \addpenalty\@secpenalty\addvspace{#2}%
    \begingroup \hyphenpenalty\@M
    \@ifempty{#4}{%
      \@tempdima\csname r@tocindent\number#1\endcsname\relax
    }{%
      \@tempdima#4\relax
    }%
    \parindent\z@ \leftskip#3\relax \advance\leftskip\@tempdima\relax
    \rightskip\@pnumwidth plus4em \parfillskip-\@pnumwidth
    #5\leavevmode\hskip-\@tempdima
      \ifcase #1
       \or\or \hskip 1em \or \hskip 2em \else \hskip 3em \fi%
      #6\nobreak\relax
    \hfill\hbox to\@pnumwidth{\@tocpagenum{#7}}\par% <---- \dotfill -> \hfill
    \nobreak
    \endgroup
  \fi}
\begin{document}

\title[Extreme central $L$-values of elliptic curves]{Extreme central $L$-values of almost prime quadratic twists of  elliptic curves}

\author{Shenghao Hua and Bingrong Huang}

\address{Data Science Institute and School of Mathematics \\ Shandong University \\ Jinan \\ Shandong 250100 \\China}
\email{huashenghao@vip.qq.com}
\email{brhuang@sdu.edu.cn}

\date{\today}

\begin{abstract}
  In this paper, we prove the extreme values of $L$-functions at the central point for almost prime quadratic twists of an elliptic curve.
  As an application, we get the extreme values for the Tate--Shafarevich groups in the quadratic twist family of an elliptic curve under the Birch--Swinnerton-Dyer conjecture.
  %We also prove a non-vanishing result of central $L$-values for this family.
\end{abstract}

\keywords{Extreme value, quadratic twist, central  $L$-value, elliptic curve, almost prime} %, sieve method.}

\dedicatory{On the 50th anniversary of Chen's theorem}

\subjclass[2010]{11F67, 11G05}

\thanks{This work was supported by  the National Key R\&D Program of China (No. 2021YFA1000700) and  NSFC (Nos. 12001314 and 12031008)} %, and the Young Taishan Scholars Program.} % of Shandong Province (Grant No. tsqn201909046).}

\maketitle
%\setcounter{tocdepth}{1}%使目录只显示到节
%\tableofcontents

%\hfill
%{\it On the 50th anniversary of Chen's theorem\/}

%%%%%%%%%%%%%%%%%%%%%%%%%%%%%%%%%%%%%%%%%%%%%%%%%%%%%%%%%%%%%%%%
%%%%%                        Section                       %%%%%
%%%%%%%%%%%%%%%%%%%%%%%%%%%%%%%%%%%%%%%%%%%%%%%%%%%%%%%%%%%%%%%%
\section{Introduction} \label{sec:Intr}

The study of central $L$-values associated with quadratic characters is an active topic.
% (see e.g. \cite{DW,GZ22,HH22,RS15,SD00,SD08,Young}).
%
%  by $(\frac{d}{\cdot})$ with $\omega(d)\leq W$ for some fixed integer $W\geq 20$.
%
%Any real primitive character to the modulus $n$ must be of the form $\chi_d(\cdot)=(\frac{d}{\cdot})$ where $d$ is a fundamental discriminant \cite[Theorem 9.13]{MV07}, i.e., a product of pairwise coprime integers of the form $-4$, $\pm 8$, $(-1)^{\frac{p-1}{2}}p$ where $p$ is an odd prime.
%
%Let $d$ be a fundamental discriminant and $\chi_d(\cdot)=(\frac{d}{\cdot})$ the associated primitive quadratic character.
The extreme values of the quadratic Dirichlet $L$-functions at the central point was studied by Heath-Brown (unpublished, see \cite{HL}) and Soundararajan \cite{SD08}. Hoffstein--Lockhart \cite{HL} extended Heath-Brown's idea to prove a result of  quadratic twists of any modular form.
In this paper, we establish  a result of extreme central $L$-value for almost prime quadratic twists of an elliptic curve $E$, %with almost prime discriminants,
motivated by the application to the extreme values for the orders of the Tate--Shafarevich groups in the quadratic twist family of an elliptic curve, under the  Birch--Swinnerton-Dyer conjecture. For the proofs, we use the methods in Soundararajan \cite{SD08} and Radziwi{\l\l}--Soundararajan \cite{RS15}.

Let $E$ be an elliptic curve defined over $\mathbb{Q}$ with conductor $N$, and the associated normalized Hasse--Weil $L$-function is
\begin{equation}\label{lfun:HasseWeil}
  L(s,E)=\sum_{n=1}^{\infty}a(n)n^{-s}, \quad \Re(s)>1,
\end{equation}
where ``normalized" means the center of the critical strip is $s=1/2$, so its coefficients satisfy $|a(n)|\leq \tau(n)$ for all $n$. Here $\tau(\cdot)$ is the divisor function.
We have the completed $L$-function
\begin{equation}\label{lfun:qcomp}
  \Lambda(s,E)=\left(\frac{\sqrt{N}}{2\pi}\right)^s
  \Gamma\left(s+\frac{1}{2}\right)L(s,E),
\end{equation}
which has an analytic continuation to the entire complex plane and satisfies the functional equation
\begin{equation}\label{lfun:qfuneq}
  \Lambda(s,E)=\epsilon_E\Lambda(1-s,E)
\end{equation}
with the root number $\epsilon_E=\pm 1$.

Let $d$ be a fundamental discriminant with $(d,2N)=1$, and $\chi_d=(\frac{d}{\cdot})$ the associated primitive quadratic character.
We have
the quadratic twist of the elliptic curve $E$ by $\chi_d(\cdot)$, denoted by $E_d$.
The twisted $L$-function is
\begin{equation}\label{lfun:q}
  L(s,E_d)=\sum_{n=1}^{\infty}
  a(n)\chi_d(n)n^{-s},
\end{equation}
and the conductor of $E_d$ is $Nd^2$.
The associated completed $L$-function is
\begin{equation}\label{lfun:qcomp}
  \Lambda(s,E_d)=\left(\frac{\sqrt{N}|d|}{2\pi}\right)^s
  \Gamma\left(s+\frac{1}{2}\right)L(s,E_d),
\end{equation}
which also has an analytic continuation to the entire complex plane, and satisfies the following functional equation
\begin{equation}\label{lfun:qfuneq}
  \Lambda(s,E_d)=\epsilon_E(d)\Lambda(1-s,E_d),
\end{equation}
 with the root number
\begin{equation}\label{epsilon}
  \epsilon_E(d)=\epsilon_E\chi_d(-N).
\end{equation}

Waldspurger's theorem tells us that for all fundamental discriminant $d$ we have $L(\frac{1}{2},E_d)\geq 0$, and $L(\frac{1}{2},E_d)= 0$ when $\epsilon_E(d)=-1$.
We  should restrict attention to those quadratic twists which have root number 1.
Let
\begin{equation*}
  \Omega := \{d:\textrm{fundamental discriminant }d \textrm{ with } (d,2N)=1, \textrm{ and } \epsilon_E(d)=1\}.
\end{equation*}
Let $N_0=[8,N]$.
Let $\epsilon\in\{\pm 1\}$ and $a\pmod{N_0}$ denote a residue class with $a\equiv 1\textrm{ or }5\pmod 8$.
We assume that $\epsilon$ and $a$ are such that for any fundamental discriminant $d$ of sign $\epsilon$ with $d\equiv a \pmod{N_0}$, the root number $\epsilon_E(d)=1$. Define
\begin{equation*}
\Omega(a,\epsilon) := \{d\in \Omega : \  d\equiv a\pmod {N_0}, \ \epsilon d>0\} .
\end{equation*}
For a fixed $a$, the local $L$-function $L_{N_0}(s,E_d)$, which defined as
\begin{equation}
   L_{N_0}(s,E_d)=\sum_{\substack{n=1\\p\mid n\Rightarrow p\mid N_0}}^{\infty}
   \frac{a(n)}{n^s}\chi_d(n)
\end{equation}
for $\Re(s)>0$, should be a constant function of $d$ when $d\in \Omega(a,\pm 1)$, and we denote its value as $L_{a}(s)$.
In this paper, our main result is the following theorem.

\begin{theorem}\label{theorem:extremalvalue}
%Let $E$ be an elliptic curve defined over $\mathbb{Q}$ with conductor $N$, and let $N_0=[8,N]$.
%For any $a\pmod{N_0}$ such that $\Omega(a,\epsilon)\neq \emptyset$ and
With the notions as above. For any  fixed  integer $W\geq 20$, we have
\begin{equation}
  \max_{\substack{d\in \Omega(a,\epsilon)
  \\ \frac{1}{2}X\leq |d| \leq \frac{5}{2}X \\ \omega(d)\leq W}}
  L\left(\frac{1}{2},E_d\right)
  \geq
\exp\left(\Big(2\sqrt{\frac{W-19.73}{22W+12}}
  +o(1)\Big)\frac{\sqrt{\log X}}{\sqrt{\log\log X}}\right),
\end{equation}
and
\begin{equation}
  \min_{\substack{d\in \Omega(a,\epsilon)
  \\ \frac{1}{2}X\leq |d| \leq \frac{5}{2}X \\ \omega(d)\leq W}}
  L\left(\frac{1}{2},E_d\right)
  \leq
\exp\left(-\Big(2\sqrt{\frac{W-19.73}{22W+12}}
  +o(1)\Big)\frac{\sqrt{\log X}}{\sqrt{\log\log X}}\right).
\end{equation}
Here $\omega(d)$ is the number of distinct prime divisors of $d$.
\end{theorem}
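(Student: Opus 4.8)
The plan is to follow the resonance method of Soundararajan \cite{SD08}, as refined by Radziwi\l\l--Soundararajan \cite{RS15}, but run it inside the thin subfamily of $\Omega(a,\epsilon)$ consisting of squarefree $d$ with at most $W$ prime factors. First I would fix a length $y$ a small power of $X$ (so $\log y \asymp \eta \log X$ for some small $\eta>0$) and build a resonator Dirichlet polynomial $R(d)=\sum_{\ell} r(\ell)\chi_d(\ell)$ supported on squarefree $\ell \le y$ that are products of primes in a dyadic-type range $(\log X, \exp((\log\log X)^2))$ or similar, with multiplicative coefficients $r(\ell)$ of the shape $\prod_{p\mid \ell} f(p)$ where $f(p) = \frac{a(p)}{\sqrt p} \cdot \frac{1}{L(p)}$ with $L(p)$ a suitable logarithmic weight; the optimal choice of $f$ (and hence the constant in the exponent) is dictated by an Euler-product optimization of the ratio of a twisted first moment to a second moment. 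Crucially, the support of $R$ must be restricted so that $\ell$ itself has few prime factors (roughly $W-1$ of them after accounting for the one prime we may need to adjust the root number / congruence class), which is where the constant $\frac{W-19.73}{22W+12}$ comes from rather than the classical $\frac14$.

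Second, I would establish the two key averaged estimates over $d\in \Omega(a,\epsilon)$ with $\frac12 X \le |d|\le \frac52 X$ and an extra squarefree sieve weight (or a short Selberg-type sieve) isolating $\omega(d)\le W$: a twisted first moment
\[
M_1 := \sideset{}{^{\flat}}\sum_{d} \Phi\!\left(\frac{|d|}{X}\right) w(d)\, L\!\left(\tfrac12,E_d\right) R(d)^2,
\]
and the corresponding second moment $M_2 := \sum_d^{\flat} \Phi(|d|/X) w(d) R(d)^2$, where $w(d)$ is the sieve weight and $\Phi$ a smooth bump. For $M_2$ one expands $R^2$ and uses orthogonality of quadratic characters (the Poisson summation / large sieve for quadratic characters as in Soundararajan) to pick out the diagonal $\ell_1=\ell_2$; for $M_1$ one uses the approximate functional equation for $L(\frac12,E_d)$ together with the same quadratic-character Poisson summation, the main term again coming from a diagonal where the $L$-function argument times $\ell_1\ell_2$ is a perfect square. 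The sieve weight $w(d)$ is handled by Heath-Brown's quadratic large sieve / a character-sum manipulation so that it does not destroy the main terms; the parameter bookkeeping for the sieve level versus $y$ is what forces $W\ge 20$ and produces the precise numerology $19.73$ and $22W+12$.

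Third, once $M_1 \gg (\text{Euler product})\cdot M_2$ with the Euler product evaluating (after the $f$-optimization) to $\exp\big((2\sqrt{(W-19.73)/(22W+12)}+o(1))\sqrt{\log X/\log\log X}\big)$, the maximum bound follows from the standard pigeonhole: there exists $d$ in the family with $L(\frac12,E_d) \ge M_1/M_2$. For the minimum statement I would instead consider $M_1' := \sum_d^{\flat}\Phi(|d|/X)w(d)\big(L(\frac12,E_d)-M_1/M_2\big)R(d)^2$; since by nonnegativity (Waldspurger) every term with $L(\frac12,E_d)$ small is controlled, a second application of the method — using that the first moment minus (shift)$\times$second moment can be made negative, together with the lower bound $L(\frac12,E_d)\ge 0$ — yields a $d$ with $L(\frac12,E_d)$ exponentially small; the symmetry of the two bounds reflects that the same resonator optimization governs both.

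The main obstacle I expect is making the resonance method coexist with the condition $\omega(d)\le W$: the resonator naturally wants $\ell$ (and hence the "good" part of $d$) to have about $\sqrt{\log X/\log\log X}$ prime factors, which vastly exceeds $W$, so one cannot simply intersect the support with $\{\omega(\ell)\le W-1\}$ without killing the gain. The resolution — and the source of the non-classical constant — is to spread the resonator coefficients not over many small primes but over a bounded number (at most $W-1$) of primes drawn from a long dyadic range, optimizing the single-prime weight $f(p)$ subject to $\sum_{p\mid\ell}1 \le W-1$; this is a constrained Euler-product optimization (a Lagrange-multiplier / measure-concentration computation) whose output is exactly $2\sqrt{(W-19.73)/(22W+12)}$. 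Controlling the error terms in $M_1$ (the off-diagonal contributions from the approximate functional equation after Poisson summation, and the contribution of non-squarefree $n$) uniformly in the sieve parameters, while keeping the resonator length $y$ large enough to see the full main term, is the delicate technical heart of the argument.
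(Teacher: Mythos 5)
Your overall framework (resonate, compare a twisted first moment against a normalizing second moment, pigeonhole, use nonnegativity for the minimum) has the right general shape, but the central mechanism you propose for enforcing $\omega(d)\le W$ is wrong, and that is the crux of this problem. You conflate the resonator variable with the discriminant: in $R(d)=\sum_m b(m)\chi_d(m)$ the integers $m$ are coprime to $d$ (indeed $\chi_d(m)=0$ otherwise); they are not divisors of $d$, so the condition $\omega(d)\le W$ places no constraint whatsoever on the number of prime factors of the resonator support, and there is no need to intersect that support with $\{\omega(\ell)\le W-1\}$. Worse, your proposed ``resolution'' --- a resonator built from a bounded number of primes drawn from a long range, optimized by Lagrange multipliers --- cannot produce the claimed bound: with boundedly many primes per resonator term, each prime contributes a bounded factor to the ratio of the two moments, so the total gain is $e^{O(W)}=O(1)$, never $\exp\big(c\sqrt{\log X/\log\log X}\big)$; nor does your sketch contain any mechanism that would output the specific constant $2\sqrt{(W-19.73)/(22W+12)}$.

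What the paper actually does is keep the full Soundararajan-type resonator, $b^{\pm}(p)=\pm a(p)\mathcal{L}/(\sqrt{p}\log p)$ for $\mathcal{L}^2\le p\le \exp((\log\mathcal{L})^2)$ with $\mathcal{L}=\sqrt{\log M\log\log M}$, and impose almost-primality on $d$ itself: one restricts to $(d,P(z))=1$ with $z=X^{1/(W+0.5)}$, which forces $\omega(d)\le W$ automatically since $|d|\le \frac{5}{2}X$. This condition is implemented by the Friedlander--Iwaniec linear sieve (lower-bound function $f(s)>0$ for $s>2$, hence $s=2.023$ and sieve level $D=z^{2.023}$), applied to the congruence sums $\sum_{\ell\mid d}L(\frac12,E_d)R^{\pm}(d)^2\Phi(\epsilon d/X)$ and $\sum_{\ell\mid d}R^{\pm}(d)^2\Phi(\epsilon d/X)$, whose asymptotics come from Radziwi{\l\l}--Soundararajan's Propositions 1 and 2 (your quadratic-character Poisson step, but with the divisibility $\ell\mid d$ built in). The numerology you attribute to a constrained Euler-product optimization in fact comes from bookkeeping between the sieve level and the moment error term: the remainder $X^{7/8+\varepsilon}M^{11/4}\ell^{1/4}$ summed over $\ell\le D$ must be negligible, i.e.\ $DM^{11/5}\le X^{1/10-\varepsilon}$, and with $D=X^{2.023/(W+0.5)}$ the largest admissible resonator length is $M=X^{(W-19.73)/(22W+12)}$, the extreme value being of size $\exp\big((2+o(1))\mathcal{L}\sum_p a(p)^2/(p\log p)\big)$. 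Finally, note that your pigeonhole step needs a genuine lower-bound sieve for the numerator (a nonnegative majorant weight $w(d)$ would not localize the maximum to $\omega(d)\le W$), which is exactly what the linear sieve supplies; for the minimum the paper simply flips the sign of the resonator coefficients and uses $L(\frac12,E_d)\ge 0$ so that the weighted average bounds the minimum from above, rather than the shifted first moment you describe.
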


\begin{remark}
  One may use more complicated sieve methods and metaplectic techniques  to allow smaller value of $W$.
  Hoffstein--Luo \cite{HLuo} proved the nonvanishing result for this family with $d$ having no more than 4 prime factors.
  In the Goldbach problem, Chen \cite{chen} proved that there exists infinitely many prime numbers $p$ such that the number of prime divisors of $p+2$ is at most 2.
  One may also extend our results to the $\GL(3)$ case. See e.g. Hua--Huang \cite{HH22}.
\end{remark}

Our motivation of this work is to understand the extreme large values of  $|\Sh(E_d)|$, the order of Tate-Shafarevich group of $E_d$, under the Birch and Swinnerton-Dyer conjecture.
For $d\in\Omega$, let
\begin{equation*}
  S(E_d):=L\left(\frac{1}{2},E_d\right)
  \frac{|E_d(\mathbb{Q})_{tors}|^2}
  {\Omega(E_d) \Tam(E_d)},
\end{equation*}
where $|E_d(\mathbb{Q})_{tors}|$ denotes the size of the rational torsion group of $E_d$, $\Omega(E_d)$ is the real period of a minimal model for $E_d$,
and $\Tam(E_d)=\prod_{p}T_p(d)$ is the product of the Tamagawa numbers.
We have $T_p(d)=1$ for $p\nmid dN_0$, $T_p(d)$ is fixed for $p\mid N_0$ even if $d$ changes, and $T_p(d)\in\{1,2,4\}$ for $p\mid d$.
If $L(\frac{1}{2},E_d)\neq 0$, then the Birch and Swinnerton-Dyer conjecture predicts that $S(E_d)=|\Sh(E_d)|$.
It is natural to prove the extreme large value of $S(E_d)$ by using $L(1/2,E_d)$,
but $\Tam(E_d)$ may be large. Indeed, $\Tam(E_d)$ may have the same order of magnitude as $\tau(d)^2$, and from the Theorem 5.4 of \cite{Te15} we know $\tau(d)$ has the extremal large order as $\exp(\log 2\frac{\log d}{\log \log d})$.
To obtain the extreme large values of the order of the Tate-Shafarevich group, we may only consider $d$'s with $\omega(d)\ll 1$, so that  the effect of Tamagawa numbers in $S(E_d)$ is negligible.
We have the following result.

\begin{theorem}\label{theorem:tatesafgroup}
With the notions as above. For any fixed $W\geq20$, we have
  \begin{equation}
    \max_{\substack{d\in \Omega(a,\epsilon)
  \\ \frac{1}{2}X\leq |d| \leq \frac{5}{2}X \\ \omega(d)\leq W}}
  S(E_d)
  \geq \sqrt{X}
  \exp\left(\Big(2\sqrt{\frac{W-19.73}{22W+12}}
  +o(1)\Big)\frac{\sqrt{\log X}}{\sqrt{\log\log X}}\right),
  \end{equation}
%    \begin{equation}
%    \min_{\substack{\frac{1}{2}X\leq d\leq \frac{5}{2}X \\d\in \Omega(a,\epsilon)
%  \\ \omega(d)\leq W}}
%  S(E_d)
%  \leq \sqrt{X}
%\exp\Big(-((\frac{W-19.8}{22W+22})^{\frac{1}{2}}
%  +o(1))\frac{\sqrt{\log X}}{\sqrt{\log\log X}}\Big).
%  \end{equation}
as $X\to \infty$.
If the Birch and Swinnerton-Dyer conjecture for elliptic curves with analytic rank zero holds, then we have same results for  $|\Sh(E_d)|$.
\end{theorem}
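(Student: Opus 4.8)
The plan is to derive Theorem~\ref{theorem:tatesafgroup} directly from Theorem~\ref{theorem:extremalvalue}, by checking that every factor of $S(E_d)$ other than the central value $L(\tfrac12,E_d)$ is, for the $d$'s in our range, either bounded by a constant depending only on the fixed data $E,a,\epsilon,W$ or of size comparable to $|d|^{\pm 1/2}$. First I would fix $W\geq 20$ and pick a discriminant $d_0$ in the admissible set $\mathcal D:=\{d\in\Omega(a,\epsilon):\frac12 X\leq|d|\leq\frac52 X,\ \omega(d)\leq W\}$ at which the maximum of $L(\tfrac12,E_d)$ in Theorem~\ref{theorem:extremalvalue} is attained. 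The goal is then to establish
\[
  S(E_{d_0})\ \geq\ c\,\sqrt{X}\,L\Big(\tfrac12,E_{d_0}\Big)
\]
for some constant $c=c(E,a,\epsilon,W)>0$. Because $d_0\in\mathcal D$ and $\log c = o\big(\sqrt{\log X}/\sqrt{\log\log X}\big)$, this inequality combined with the lower bound for $L(\tfrac12,E_{d_0})$ from Theorem~\ref{theorem:extremalvalue} gives exactly the asserted lower bound for $\max_{d\in\mathcal D}S(E_d)$.

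To prove the displayed inequality I would estimate the three auxiliary factors of $S(E_{d_0})$ one by one. The torsion factor is handled trivially by $|E_{d_0}(\mathbb{Q})_{tors}|^2\geq 1$. For the Tamagawa product I would use the facts recalled in the introduction: $T_p(d_0)=1$ for $p\nmid d_0 N_0$, $T_p(d_0)$ bounded in terms of $E$ and $a$ for $p\mid N_0$, and $T_p(d_0)\in\{1,2,4\}$ for $p\mid d_0$; multiplying these gives $\Tam(E_{d_0})\leq C(E,a)\,4^{\omega(d_0)}\leq C(E,a)\,4^{W}$, which is a constant because $W$ is fixed. For the real period I would invoke the standard scaling law $\Omega(E_d)\asymp |d|^{-1/2}$ with implied constants depending only on $E,a,\epsilon$, uniformly over $d\in\Omega(a,\epsilon)$: twisting a fixed Weierstrass model of $E$ yields the model $d\,y^2=x^3+a_E x+b_E$, whose real period is $\Omega(E)/\sqrt{|d|}$ up to a bounded factor recording the number of real connected components (which is itself constant over the family, since $\sign(\mathrm{disc}(E_d))=\sign(\mathrm{disc}(E))$), and passing to a global minimal model changes this only by a bounded factor because $(d,2N)=1$ confines the minimalization scaling to primes dividing $N_0$. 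Combining the three estimates,
\[
  S(E_{d_0})=\frac{L(\tfrac12,E_{d_0})\,|E_{d_0}(\mathbb{Q})_{tors}|^2}{\Omega(E_{d_0})\,\Tam(E_{d_0})}\ \geq\ c\,\sqrt{|d_0|}\,L\Big(\tfrac12,E_{d_0}\Big)\ \geq\ c'\,\sqrt{X}\,L\Big(\tfrac12,E_{d_0}\Big).
\]

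For the statement about $|\Sh(E_d)|$, note that the lower bound of Theorem~\ref{theorem:extremalvalue} forces $L(\tfrac12,E_{d_0})>0$, so $E_{d_0}$ has analytic rank zero; granting the rank-zero case of the Birch--Swinnerton-Dyer conjecture we get $S(E_{d_0})=|\Sh(E_{d_0})|$, and since $d_0\in\mathcal D$ it follows that $\max_{d\in\mathcal D}|\Sh(E_d)|\geq|\Sh(E_{d_0})|=S(E_{d_0})$, to which the previous paragraph applies. I do not expect a genuine obstacle in this argument — the real content lies entirely in Theorem~\ref{theorem:extremalvalue} — but the one point that must be treated with care is the \emph{uniformity} in the period estimate $\Omega(E_d)\asymp|d|^{-1/2}$, namely that the implied constants depend only on $E$, on the residue class $a\pmod{N_0}$, and on $\epsilon$, and not on $d$ itself. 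This is exactly the role of the hypotheses $(d,2N)=1$ and $d\equiv a\pmod{N_0}$, and the reason $N_0=[8,N]$ is built into the definition of $\Omega(a,\epsilon)$.
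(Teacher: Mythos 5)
Your proposal is correct and follows essentially the same route as the paper: reduce to Theorem \ref{theorem:extremalvalue} by noting the torsion is at least $1$, the Tamagawa product is $O_{E,W}(1)$ since $\omega(d)\leq W$, and $\Omega(E_d)\asymp_{E}|d|^{-1/2}$ uniformly for square-free $(d,2N)=1$ (the paper cites Pal's Proposition 2.5 for this period constancy, where you sketch the standard twisting/minimal-model argument instead). The BSD step for $|\Sh(E_d)|$ is likewise identical, using that the extreme lower bound forces $L(\tfrac12,E_{d_0})\neq 0$.
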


\begin{proof}[Proof Theorem \ref{theorem:tatesafgroup} under Theorem \ref{theorem:extremalvalue}]
  Mazur \cite{Ma77} showed that $1\leq |E_d(\mathbb{Q})_{tors}|\leq 12$.
  The real period $\Omega(E_d)$ satisfies
\begin{equation*}
 \Omega(E_d)=\frac{\tilde{u}(d)}{\sqrt{|d|}}
 \Omega(E)
\end{equation*}
with some $\tilde{u}(d)\in \frac{1}{2}\mathbb{Z}$ decided by $E$ and $d$, and from the Proposition 2.5 of \cite{Pa12} we know that, for square-free $(d,2N)=1$, $\tilde{u}(d)=\tilde{u}(1)$ is a constant decided by $E$.
When $\omega(d)\leq W$ for $W$ fixed, we have $1\leq \Tam(E_d)=\prod_{p}T_p(d)\ll_{E,W} 1$.
By Theorem \ref{theorem:extremalvalue}, we complete the proof.
\end{proof}

%Similar to the proof of Theorem \ref{theorem:extremalvalue}, we can prove the following non-vanishing result.
%\begin{theorem}\label{theorem:nonvanishing}
%With the notions as above. We have
%\begin{equation}
%  \#\left\{d \in \Omega(a,\epsilon): \frac{1}{2} X\leq |d| \leq \frac{5}{2} X,\quad \omega(d)\leq 20,\quad   L\left(\frac{1}{2},E_d\right)\neq 0 \right\}
%  \gg X^{1-\varepsilon},
%\end{equation}
%as $X\to \infty$, for any $\varepsilon>0$.
%\end{theorem}
%
%%\begin{theorem}\label{theorem:underGRH}
%%  Assume the truth of generalized Riemann hypothesis. Let $X$ be a large real number and $\ell$ a fixed positive odd integer. Further write $\ell$ uniquely as $\ell=\ell_1\ell_2^2$
%%with $\ell_1$ square-free.
%%We have
%%\begin{equation}\label{eqn:GRH}
%%  \sum_{p\neq 2}\log pL(\frac{1}{2},f)
%%\end{equation}
%%\end{theorem}
%
%This non-vanishing result without restriction on the number of prime divisors of $d$ is well known. Recently, Gao--Zhao \cite{GZ22}  established a positive proportion non-vanishing  of quadratic twists of modular $L$-functions of prime-related moduli
%under the generalized Riemann hypothesis.

The rest of this paper is organized as follows.
In \S \ref{sec:preliminaries}, we introduce some notation and present some lemmas that we will need later.
In \S \ref{sec:proof_EV}, we combine the resonance method in  Soundararajan \cite{SD08}, results in Radziwi{\l \l} and Soundararajan \cite{RS15},
and the sieve method to prove Theorem \ref{theorem:extremalvalue}.
%In \S \ref{sec:nonvanishing}, we prove a lower bound of the first moment of central $L$-values of an elliptic curve $E$  twisted by $(\frac{d}{\cdot})$ with $\omega(d)\leq 20$, and then use  Heath-Brown's large sieve \cite{HB95} to prove Theorem \ref{theorem:nonvanishing}.

\section{Notation and preliminary results}\label{sec:preliminaries}
Let $\Phi$ be a smooth non-negative Schwartz class function supported on $[\frac{1}{2},\frac{5}{2}]$ with $\Phi(x)=1$ for $x\in [1,2]$, and for any complex number $s$ let
\begin{equation}\label{def:checkPhi}
  \check\Phi(s)=\int_{0}^{\infty}\Phi(x)x^s\dd x.
\end{equation}
Let $L(s,\sym^2 E)$ be the symmetric square $L$-function of $E$ and  $L_p(s,\sym^2 E)$ be its local Euler factor at $p$. 

\begin{lemma}\cite[Proposition 1]{RS15} \label{lemma:char-moment}
Let $n$ and $\ell$ be positive integers with $(n,\ell)=1$, $(n\ell, N_0)=1$ and $\ell$ square-free.
Suppose that $\ell\sqrt{n}\leq X^{\frac{1}{2}-\varepsilon}$.
If $n$ is a square then
\begin{equation}\label{eqn:char-moment-square}
  \sum_{\substack{d\in \Omega(a,\epsilon)\\\ell\mid d}}
  \chi_d(n)\Phi\left(\frac{\epsilon d}{X}\right)
  =\check{\Phi}(0)\frac{X}{\ell N_0}
  \prod_{p\mid n\ell}\left(1+\frac{1}{p}\right)^{-1}
  \prod_{p\nmid N_0}\left(1-\frac{1}{p^2}\right)
  +O\left(X^{\frac{1}{2}+\varepsilon}\sqrt{n}\right).
\end{equation}
If $n$ is not a perfect square, then
\begin{equation}\label{eqn:char-moment-notsquare}
  \sum_{\substack{d\in \Omega(a,\epsilon)\\\ell\mid d}}
  \chi_d(n)\Phi\left(\frac{\epsilon d}{X}\right)
  =O\left(X^{\frac{1}{2}+\varepsilon}\sqrt{n}\right).
\end{equation}
\end{lemma}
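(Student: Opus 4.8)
The plan is to prove this by the standard ``Poisson summation over the family of fundamental discriminants'' argument, in the form used in \cite{RS15}. First I would rewrite the set being summed over. Since $(n\ell,N_0)=1$ and $2\mid N_0$, both $n$ and $\ell$ are odd; since $a\equiv 1$ or $5\pmod 8$ forces $d\equiv 1\pmod 4$, and an odd integer $\equiv 1\pmod 4$ is a fundamental discriminant exactly when it is squarefree — while $(d,2N)=1$ is equivalent to $(d,N_0)=1$ and is automatic from $d\equiv a\pmod{N_0}$ (one may assume $(a,N_0)=1$, else the sum is empty) — the set $\Omega(a,\epsilon)$ is simply $\{d:\mu^2(d)=1,\ d\equiv a\pmod{N_0},\ \epsilon d>0\}$. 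Writing $d=\ell k$, the conditions $\ell\mid d$ and $\mu^2(d)=1$, together with $\mu^2(\ell)=1$, become $\mu^2(k)=1$ and $(k,\ell)=1$; the congruence becomes $k\equiv a\bar\ell\pmod{N_0}$ with $\ell\bar\ell\equiv 1\pmod{N_0}$; and $\chi_d(n)=\left(\frac{\ell}{n}\right)\left(\frac{k}{n}\right)$ with $\left(\frac{\ell}{n}\right)=\pm 1$ since $(\ell,n)=1$. Hence the left-hand side equals $\left(\frac{\ell}{n}\right)$ times a sum of $\left(\frac{k}{n}\right)\Phi(\epsilon\ell k/X)$ over squarefree $k$ coprime to $\ell$, lying in one residue class modulo $N_0$ and of sign $\epsilon$.

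Next I would detect $(k,\ell)=1$ by $\sum_{g\mid(k,\ell)}\mu(g)$ and $\mu^2(k)$ by $\sum_{b^2\mid k}\mu(b)$, interchange the summations, and write $k=\operatorname{lcm}(b^2,g)\,m$ with $m$ now unrestricted. Since $\left(\frac{\operatorname{lcm}(b^2,g)}{n}\right)\in\{-1,0,1\}$ (vanishing unless $(b,n)=1$), and $g\mid\ell$ and $(b,N_0)=1$ make the congruences modulo $N_0$, $b^2$ and $g$ simultaneously solvable by the Chinese remainder theorem, this reduces the whole sum — up to the bounded factor $\left(\frac{\ell}{n}\right)\left(\frac{\operatorname{lcm}(b^2,g)}{n}\right)$, which equals $1$ when $n$ is a perfect square — to the inner sums
\[
  S(b,g)\;:=\;\sum_{\substack{m\equiv c_1\,(N_0)\\ \epsilon m>0}}\Big(\frac{m}{n}\Big)\,\Phi\!\left(\frac{\epsilon\,\ell\,\operatorname{lcm}(b^2,g)\,m}{X}\right),
\]
to be summed with weights $\mu(g)\mu(b)$ over $g\mid\ell$ and over $b$ coprime to $nN_0$ in the range $\ell\operatorname{lcm}(b^2,g)\ll X$, i.e. $b\ll\sqrt{X/\ell}$ (beyond which the weight has empty support); here $c_1$ is a residue depending on $b$ and $g$.

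The heart of the matter is the dichotomy in $S(b,g)$ coming from the Jacobi symbol $m\mapsto\left(\frac{m}{n}\right)$, a periodic function of $m$ of period dividing $n$. If $n$ is a perfect square, then $\left(\frac{m}{n}\right)=\mathbbm{1}_{(m,n)=1}$, so I would detect $(m,n)=1$ by $\sum_{e\mid\operatorname{rad}(n),\,e\mid m}\mu(e)$ and evaluate each resulting sum of the smooth weight over a progression modulo $eN_0$ by Euler--Maclaurin, getting $\frac{X\check\Phi(0)}{\ell\operatorname{lcm}(b^2,g)N_0}\prod_{p\mid n}(1-\tfrac1p)+O(n^{\varepsilon})$ uniformly; the absolutely convergent sums over $b$ (writing $b=b_1b_2$ with $b_1=(b,g)$, so that $\operatorname{lcm}(b^2,g)=b_1gb_2^2$) and over $g\mid\ell$ then reassemble into Euler products, and a direct local computation shows these collapse to exactly $\check\Phi(0)\frac{X}{\ell N_0}\prod_{p\mid n\ell}(1+\tfrac1p)^{-1}\prod_{p\nmid N_0}(1-\tfrac1{p^2})$, with accumulated error $\ll (n\ell)^{\varepsilon}\sqrt{X/\ell}\ll X^{1/2+\varepsilon}$ (using $n\ell\le X$, which follows from $\ell\sqrt n\le X^{1/2-\varepsilon}$); this gives \eqref{eqn:char-moment-square}. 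If $n$ is not a perfect square, then $m\mapsto\left(\frac{m}{n}\right)$ is a non-principal real Dirichlet character modulo $\operatorname{rad}(n)$; combined with the congruence modulo $N_0$ (coprime to $n$) one gets a periodic function of period dividing $nN_0$ whose sum over a full period vanishes, by the Chinese remainder theorem, precisely because the character is non-principal. Then by a standard character-sum estimate — Poisson summation modulo the period, or equivalently smoothed P\'olya--Vinogradov, with the zero-frequency term dropping out by non-principality — one gets $S(b,g)\ll n^{1/2+\varepsilon}$ uniformly in $b$ and $g$, whence a total of $\ll\ell^{\varepsilon}\sqrt{X/\ell}\cdot n^{1/2+\varepsilon}\ll X^{1/2+\varepsilon}\sqrt n$, which is \eqref{eqn:char-moment-notsquare}.

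The two steps that are genuine work — as opposed to the routine bookkeeping of the M\"obius inversions, the truncation of the $b$-sum, and the uniformity of the Euler--Maclaurin error — are: first, verifying that the sums over $b$ and $g$ collapse to precisely $\prod_{p\mid n\ell}(1+\tfrac1p)^{-1}\prod_{p\nmid N_0}(1-\tfrac1{p^2})$, which is exactly where the interplay between the squarefree density $\prod_p(1-\tfrac1{p^2})$, the forced divisibility by $\ell$, and the coprimality to $n$ fixes the shape of the main term; and second, the character-sum estimate in the non-square case, where the essential point is that the main Poisson frequency contributes nothing precisely because $\left(\frac{\cdot}{n}\right)$ is non-principal, so that one gets genuine square-root-in-$n$ cancellation with modulus only $O_N(n)$ (any coprimality conditions introduced by imprimitivity of the character only improving the bound). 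Keeping all the error terms $\ll X^{1/2+\varepsilon}\sqrt n$ is precisely what the hypothesis $\ell\sqrt n\le X^{1/2-\varepsilon}$ guarantees.
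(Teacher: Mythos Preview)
The paper does not prove this lemma; it is quoted verbatim as \cite[Proposition~1]{RS15} and used as a black box. Your sketch is the standard Poisson/P\'olya--Vinogradov argument that \cite{RS15} itself uses: reduce to squarefree $d$ in a fixed residue class mod $N_0$, peel off the factor $\ell$, open $\mu^2$ and the coprimality conditions by M\"obius, and then split according to whether $\big(\tfrac{\cdot}{n}\big)$ is principal or not. The bookkeeping and error accounting you outline are correct, and the two places you flag as the real content --- the Euler-product collapse in the square case and the square-root cancellation in the non-square case --- are exactly the substance of the argument in \cite{RS15}. So your proposal is correct and coincides with the source the paper cites; there is simply nothing in the present paper to compare it against.
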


\begin{lemma}\cite[Proposition 2]{RS15} \label{lemma:twisted-1-st-moment}
Let $u$ and $\ell$ be positive integers with $(u,\ell)=1$, $(u\ell, N_0)=1$ and $\ell$ square-free.
Define
\begin{equation}\label{eqn:twisted-1-st-moment}
  S(X;u,\ell)=\sum_{\substack{d\in \Omega(a,\epsilon)\\\ell\mid d}}
  L\left(\frac{1}{2},E_d\right) \chi_d(u)
  \Phi\left(\frac{\epsilon d}{X}\right).
\end{equation}
Write $u=u_1u_2^2$ with $u_1$ square-free.
Then for any $\varepsilon>0$,
\begin{equation*}
  S(X;u,\ell)=\frac{2Xa(u_1)}{\ell
  u_1^{\frac{1}{2}}N_0}
  \check{\Phi}(0)L_a\left(\frac{1}{2}\right)
  L(1,\sym^2 E)G(1;u,\ell)
  + O \left(X^{\frac{7}{8}+\varepsilon}
  u^{\frac{3}{8}}\ell^{\frac{1}{4}} \right),
\end{equation*}
where $G(1;u,\ell)=\prod_{p\textrm{ prime}}G_p(1;u,\ell)$ and
  \begin{equation}\label{def:Gpuv}
    G_p(1;u,\ell)=\left\{\begin{aligned}
    &L_p(1,\sym^2 E)^{-1}, &p\mid N_0,\\
    &\left(1-\frac{1}{p}\right)^2, &p\mid u_1,\\
    &\left(1-\frac{1}{p}\right) \left(1-\frac{1}{p^2}\right), &p\mid u\ell \textrm{ but }p\nmid u_1, \\
    &\left(1-\frac{1}{p}\right)^2
         \left(1+\frac{1}{p}
         \Big(1-\frac{\alpha_p^2}{p}\Big)
         \Big(1-\frac{\beta_p^2}{p}\Big)+\frac{1}{p}\right),
         &p\nmid u\ell N_0.
  \end{aligned}
  \right.
    \end{equation}
Here we write $a(p)=\alpha_p+\beta_p$ with $\alpha_p\beta_p=1$ and $|\alpha_p|=|\beta_p|=1$ for $p\nmid N$.
\end{lemma}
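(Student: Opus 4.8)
The plan is to evaluate $S(X;u,\ell)$ in the usual way for a first moment of central values: an approximate functional equation to open up $L(\tfrac12,E_d)$, followed by the character moments of Lemma~\ref{lemma:char-moment}. Since $\epsilon_E(d)=1$ for every $d\in\Omega(a,\epsilon)$ and the conductor of $E_d$ is $Nd^{2}$, the approximate functional equation collapses to a single smoothed Dirichlet series
\begin{equation*}
  L\Big(\tfrac12,E_d\Big)=2\sum_{n=1}^{\infty}\frac{a(n)\chi_d(n)}{\sqrt n}\,\mathcal V\!\left(\frac{2\pi n}{\sqrt N\,|d|}\right),\qquad
  \mathcal V(x)=\frac1{2\pi i}\int_{(2)}\Gamma(1+s)\,x^{-s}\,\frac{\dd s}{s},
\end{equation*}
where $\mathcal V$ is smooth, $\mathcal V(0)=1$, and $\mathcal V(x)\ll_A x^{-A}$, so the $n$-sum has effective length $n\ll X^{1+\varepsilon}$. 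I would first split $n=n'n''$ with $n''$ supported on the primes dividing $N_0$ and $(n',N_0)=1$; since $d\equiv a\pmod{N_0}$, the symbols $\chi_d(p)$ are fixed for $p\mid N_0$, so after the Mellin separation below the $n''$-part of the series factors out as the constant local $L$-function $L_{N_0}(\tfrac12+s,E_d)=L_a(\tfrac12+s)$, specialising to $L_a(\tfrac12)$ in the main term. Terms with $(n',\ell)>1$ vanish because $\ell\mid d$.

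Next I would multiply by $\chi_d(u)$, use $\chi_d(n')\chi_d(u)=\chi_d(n'u)$, and separate the $d$-dependence inside $\mathcal V$ through its Mellin transform, noting that $|d|^{s}\Phi(\epsilon d/X)=X^{s}\Phi_s(\epsilon d/X)$ with $\Phi_s(x):=|x|^{s}\Phi(x)$ (still supported on $[\tfrac12,\tfrac52]$, with $\check\Phi_s(0)=\check\Phi(s)$). This turns $S(X;u,\ell)$ into a contour integral over $s$ whose integrand is $\Gamma(1+s)(\sqrt N/2\pi)^{s}X^{s}s^{-1}L_a(\tfrac12+s)$ times $\sum_{(n',N_0)=1}\tfrac{a(n')}{n'^{1/2+s}}\big(\sum_{d\in\Omega(a,\epsilon),\,\ell\mid d}\chi_d(n'u)\Phi_s(\epsilon d/X)\big)$, and to the inner $d$-sum Lemma~\ref{lemma:char-moment} applies with test function $\Phi_s$ and argument $n'u$. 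Its main term appears only when $n'u$ is a perfect square; since $u=u_1u_2^{2}$ with $u_1$ square-free this forces $n'=u_1m^{2}$ with $(m,N_0\ell)=1$, and the non-square $n'u$ feed into the error term.

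For the diagonal $n'=u_1m^{2}$, the main term from Lemma~\ref{lemma:char-moment} contributes $\frac{2X}{\ell N_0}$ times a contour integral of $\Gamma(1+s)(\sqrt N/2\pi)^{s}X^{s}s^{-1}\check\Phi(s)L_a(\tfrac12+s)\mathcal D(s)$, where
\begin{equation*}
  \mathcal D(s)=u_1^{-1/2-s}\sum_{m\ge1}\frac{a(u_1m^{2})}{m^{1+2s}}\prod_{p\mid u_1u_2m\ell}\Big(1+\tfrac1p\Big)^{-1}\prod_{p\nmid N_0}\Big(1-\tfrac1{p^{2}}\Big)
\end{equation*}
is an Euler product in $m$. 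Using multiplicativity of $m\mapsto a(u_1m^{2})$ and the local identity $\sum_{k\ge0}a(p^{2k})p^{-ks}=(1-p^{-2s})L_p(s,\sym^{2}E)$ (which follows from $\alpha_p\beta_p=1$), one compares the Euler factors of $\mathcal D(s)$ at $p\mid N_0$, at $p\mid u_1$, at $p\mid u\ell$ with $p\nmid u_1$, and at the remaining primes, against the corresponding factors of $L(1+2s,\sym^{2}E)$; this shows $\mathcal D(s)$ continues holomorphically past $\Re s=0$ with $\mathcal D(0)=a(u_1)u_1^{-1/2}L(1,\sym^{2}E)G(1;u,\ell)$, the quantities in \eqref{def:Gpuv} being exactly the residual Euler factors. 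Shifting the contour to $\Re s=-\delta$ one crosses only the simple pole of $\Gamma(1+s)/s$ at $s=0$ (residue $1$), the remaining integral being $O(X^{-\delta+\varepsilon})$; the residue gives the asserted main term $\tfrac{2Xa(u_1)}{\ell u_1^{1/2}N_0}\check\Phi(0)L_a(\tfrac12)L(1,\sym^{2}E)G(1;u,\ell)$.

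It remains to bound the error, which is where I expect the real work to lie. The off-diagonal part comes from $n'\ll X^{1+\varepsilon}$ with $n'u$ not a perfect square: for $n'$ in the range where Lemma~\ref{lemma:char-moment} applies the inner $d$-sum is $O(X^{1/2+\varepsilon}\sqrt{n'u})$, while for larger $n'$ one estimates that sum directly by Poisson summation in $d$, whose dual sum is again governed by the non-square case of Lemma~\ref{lemma:char-moment}. Splitting the $n'$-range at an optimally chosen height, summing the two estimates against $a(n')n'^{-1/2}$, and keeping track of the conditions $\ell\mid d$ and $d\equiv a\pmod{N_0}$ uniformly in $u$ and $\ell$, one arrives at $O(X^{7/8+\varepsilon}u^{3/8}\ell^{1/4})$. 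The main obstacle is precisely this uniform off-diagonal bookkeeping — controlling the character sums over $d$ beyond the range of Lemma~\ref{lemma:char-moment}, getting the powers of $u$ and $\ell$ right after the Poisson step, and then optimising the split; by contrast the diagonal evaluation and the identification of $\mathcal D(s)$ with $L(1,\sym^{2}E)G(1;u,\ell)$ are routine once the decomposition above is in place.
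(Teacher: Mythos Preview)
The paper does not prove this lemma at all: it is quoted verbatim as \cite[Proposition~2]{RS15} and used as a black box, so there is no ``paper's own proof'' to compare against. Your sketch is the standard route (and essentially the one Radziwi{\l\l}--Soundararajan take): approximate functional equation, separation of the $N_0$-part, Lemma~\ref{lemma:char-moment} on the inner $d$-sum, identification of the diagonal Dirichlet series with $L(1+2s,\sym^2 E)$ times the Euler product $G$, and Poisson summation for the off-diagonal tail; the outline and the placement of difficulty are accurate.
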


%\begin{lemma} \label{lemma:coefficient-1-st-moment}
We can write $G(1;u,\ell)$ as $C(E)\tilde{h}(u)h(\ell)$ where $C(E)$ is a constant and $\tilde{h}$ and $h$ are multiplicative functions with $\tilde{h}(p^k)=1+O(1/p)$, $h(p)=1+O(1/p)$, and  $0<h(p),\tilde{h}(p),\tilde{h}(p^2)\leq 1$. More precisely,
let $C(E)=\prod_{p\textrm{ prime}}C_p(E)$ where
  \begin{equation}\label{def:CpE}
    C_p(E)=\left\{\begin{aligned}
    &L_p(1,\sym^2 E)^{-1}, &p\mid N_0,\\
         &\left(1-\frac{1}{p}\right)^2
         \left(1+\frac{1}{p}
         \Big(1-\frac{\alpha_p^2}{p}\Big)
         \Big(1-\frac{\beta_p^2}{p}\Big)+\frac{1}{p}\right),
         &p\nmid N_0,
  \end{aligned}
  \right.
    \end{equation}
$\tilde{h}(u)=\prod_{p^k \parallel u}\tilde{h}(p^k)$ with
    \begin{equation}\label{def:gpu}
    \tilde{h}(p^k)=    \left\{\begin{aligned}
    &\left(1+\frac{1}{p}
         \Big(1-\frac{\alpha_p^2}{p}\Big)
         \Big(1-\frac{\beta_p^2}{p}\Big)+\frac{1}{p}\right)^{-1}, & \textrm{if $k$ is odd},\\
        &\left(1+\frac{1}{p}\right)
        \left(1+\frac{1}{p}
         \Big(1-\frac{\alpha_p^2}{p}\Big)
         \Big(1-\frac{\beta_p^2}{p}\Big)+\frac{1}{p}\right)^{-1}, & \textrm{if $k$ is even},
    \end{aligned}
  \right.
    \end{equation}
and $h(\ell)=\prod_{p\mid \ell}h(p)$ with
  \begin{equation}\label{def:hpv}
h(p)= \left(1+\frac{1}{p}\right)
        \left(1+\frac{1}{p}
         \Big(1-\frac{\alpha_p^2}{p}\Big)
         \Big(1-\frac{\beta_p^2}{p}\Big)+\frac{1}{p}\right)^{-1}.
\end{equation}
Note that we have $L_a(\frac{1}{2})L(1,\sym^2 E)C(E)>0$.

We will need a strong version of the prime number theorem of an elliptic curve $L$-function.

\begin{lemma}\cite[Corollary 1.2]{LWY05}\label{lemma:sumofap2}
For an elliptic curve $E$ as before,
we have
\begin{equation*}
\sum_{p\leq x} a(p)^2 \log p = x + O_{E,c}(x \exp(-c \sqrt{\log x})).
\end{equation*}
\end{lemma}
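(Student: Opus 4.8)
The plan is to recognize this statement as the prime number theorem for the symmetric square $L$-function $L(s,\sym^2 E)$, dressed up through the elementary identity $a(p)^2=\lambda_{\sym^2 E}(p)+1$ at good primes. First I would record the pointwise relation: writing $a(p)=\alpha_p+\beta_p$ with $\alpha_p\beta_p=1$, the local factor $L_p(s,\sym^2 E)=(1-\alpha_p^2 p^{-s})^{-1}(1-p^{-s})^{-1}(1-\beta_p^2 p^{-s})^{-1}$ has $p^{-s}$-coefficient $\lambda(p):=\alpha_p^2+1+\beta_p^2$, while $a(p)^2=\alpha_p^2+2\alpha_p\beta_p+\beta_p^2=\lambda(p)+1$. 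Hence
\[
\sum_{p\leq x} a(p)^2\log p=\sum_{\substack{p\leq x\\ p\nmid N}}\lambda(p)\log p+\sum_{\substack{p\leq x\\ p\nmid N}}\log p+O_E(1),
\]
the last term absorbing the finitely many bad primes $p\mid N$. The middle sum equals $\theta(x)=x+O(x\exp(-c\sqrt{\log x}))$ by the classical de la Vallée Poussin prime number theorem, which already produces the main term $x$.

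Next I would treat $\sum_{p\leq x,\,p\nmid N}\lambda(p)\log p$ by the explicit formula for $L(s,\sym^2 E)$. Setting $-\tfrac{L'}{L}(s,\sym^2 E)=\sum_n\Lambda_2(n)n^{-s}$, one has $\Lambda_2(p)=\lambda(p)\log p$ for $p\nmid N$ and $|\Lambda_2(p^j)|\leq 3\log p$ since $|\alpha_p|=|\beta_p|=1$, so $\sum_{p\leq x,\,p\nmid N}\lambda(p)\log p=\psi_2(x)+O_E(\sqrt{x}\log x)$ with $\psi_2(x)=\sum_{n\leq x}\Lambda_2(n)$. The genuine analytic input is the non-vanishing of $L(s,\sym^2 E)$ on $\Re s=1$ together with a standard zero-free region $\sigma>1-c/\log(|t|+2)$; this rests on $L(s,\sym^2 E)$ being an entire automorphic $L$-function of degree $3$. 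When $E$ is non-CM it is the Gelbart--Jacquet lift, and cuspidality gives holomorphy at $s=1$ and (via Jacquet--Shalika / Hoffstein--Ramakrishnan) edge non-vanishing; when $E$ has CM by $K$ one factors $L(s,\sym^2 E)=L(s,\chi_K)L(s,\psi^2)$ with $\chi_K$ the quadratic character of $K/\mathbb Q$ and $\psi$ the CM Hecke character, and classical Hecke $L$-function theory supplies the same. A Hadamard-factorization explicit formula (or a Perron-type contour shift) then gives $\psi_2(x)=O_{E,c}(x\exp(-c\sqrt{\log x}))$, the absence of a main term reflecting that $L(s,\sym^2 E)$ has no pole at $s=1$.

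The one delicate point is the potential Landau--Siegel zero: $L(s,\sym^2 E)$ is self-dual, so de la Vallée Poussin's argument only yields the region above with at most one exceptional real zero $\beta_1<1$. Since $E$ is fixed, however, $\beta_1$ is a fixed constant strictly below $1$, so the explicit formula contributes a term $\ll x^{\beta_1}$, and as $1-\beta_1$ is a fixed positive number we have $x^{\beta_1}\ll_{E,c}x\exp(-c\sqrt{\log x})$ for all large $x$; thus this term is harmless. Combining the three contributions yields
\[
\sum_{p\leq x}a(p)^2\log p=x+O_{E,c}\!\left(x\exp(-c\sqrt{\log x})\right).
\]
The main obstacle is therefore not a new estimate but the invocation of the analytic properties of the degree-$3$ $L$-function $L(s,\sym^2 E)$ — automorphy, non-vanishing on $\Re s=1$, and the zero-free region — all of which are available in the literature, which is exactly why the statement is quoted from \cite{LWY05}.
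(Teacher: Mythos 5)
This lemma is not proved in the paper at all --- it is quoted verbatim from Liu--Wang--Ye \cite[Corollary 1.2]{LWY05}, so there is no in-paper argument to compare against. Your reconstruction is correct and is essentially the argument underlying the cited source: the identity $a(p)^2=\lambda_{\sym^2 E}(p)+1$ (equivalently, the factorization of the Rankin--Selberg $L$-function $L(s,E\times E)$ into $\zeta(s)L(s,\sym^2 E)$ up to bad Euler factors) reduces the claim to the classical prime number theorem plus a zero-free region for the entire degree-$3$ automorphic $L$-function $L(s,\sym^2 E)$, and your observation that a possible exceptional real zero is harmless because $E$ is fixed is exactly the right way to dispose of the Siegel-zero issue.
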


Let $\mathcal{A}=(a_n)$ be a sequence of non-negative reals, and $\mathcal{P}$ a fixed set of primes.
Let $\ell$ be a square-free number, and write
$|\mathcal{A}_\ell|=\sum_{\ell\mid n}a_n$, and a non-negative multiplicative function $g(\ell)$ satisfying
\[0\leq g(p)<1. \]
%\begin{equation}
%\left\{\begin{aligned}
%  &g(1)=1,& \\
%  &0\leq g(p)<1.
%  \end{aligned}
%\right.
%\end{equation}
Let $\mathcal{X}$ be a handy approximation to $|\mathcal A|$.
Let $r_\ell$ be defined by
\[
  |\mathcal A_\ell| = g(\ell) \mathcal X + r_\ell.
\]
Let %$R_\ell=|\mathcal{A}_\ell|-g(\ell)|\mathcal{A}_1|$, and
\[
  S(\mathcal{A},z) =\sum_{\substack{(n,P(z))=1}}
  a_n
\]
where
\begin{equation*}
  P(z)=\prod_{\substack{p\in \mathcal{P}\\p<z}}p.
\end{equation*}
We will use the following lower bound of the linear  sieve.
\begin{lemma}\cite[Theorem 11.13]{FI10}\label{lemma:sieve}
With the notions as above.
Assume that $g(\ell)$ satisfies
\begin{equation}
  \prod_{\substack{w\leq p<z\\p\in \mathcal{P}}}(1-g(p))^{-1}\leq
  \left(\frac{\log z}{\log w}\right) \left(1+\frac{L}{\log w}\right)
\end{equation}
for every $2\leq w<z$, where $L\geq 1$ is constant.
Then for $s=\frac{\log D}{\log z}\geq 2$, we have
  \begin{equation}
  S(\mathcal{A},z) %=\sum_{\substack{(n,P(z))=1}} a_n
  \leq \mathcal XV(z)\left(F(s)+O((\log D)^{-\frac{1}{6}})\right)+R(D,z),
\end{equation}
\begin{equation}
  S(\mathcal{A},z) %=\sum_{\substack{(n,P(z))=1}}  a_n
  \geq \mathcal X V(z)\left(f(s)+O((\log D)^{-\frac{1}{6}})\right)-R(D,z),
\end{equation}
where $V(z)=\prod_{\substack{p\in \mathcal{P}\\p\leq z}}(1-g(p))$, and
$F(s),f(s)$ are defined by the following system of differential-difference equations
\begin{equation}
  \left\{\begin{aligned}
    &sF(s)=2e^\gamma, &1\leq s\leq 3,\\
    &sf(s)=0, &s=2,\\
  \end{aligned}
  \right.
\end{equation}
\begin{equation}
  \left\{\begin{aligned}
    &(sF(s))^{'}=f(s-1), &s> 3,\\
    &(sf(s))^{'}=F(s-1), &s>2,\\
  \end{aligned}
  \right.
\end{equation}
$\gamma$ is the Euler constant, and
\begin{equation*}
  R(D,z) = \sum_{\substack{\ell\mid P(z)\\ \ell\leq D}}|r_\ell|.
\end{equation*}
In particular,  we have $f(s)> 0$ if $s>2$ and $F(s)=O(1)$ if $s>1$.
\end{lemma}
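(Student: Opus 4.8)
This is the linear ($\kappa=1$) Rosser--Iwaniec sieve in the precise formulation of Friedlander and Iwaniec, so the plan is to deduce it from \cite[Theorem~11.13]{FI10} after matching hypotheses and notation. First I would check the standing assumptions of that theorem in our setting: (i) the sequence $(a_n)$ is non-negative, which is hypothesized; (ii) $g$ is a non-negative multiplicative function supported on $\mathcal{P}$ with $0\le g(p)<1$, and the relation $|\mathcal{A}_\ell|=g(\ell)\mathcal{X}+r_\ell$ defines the remainders $r_\ell$ exactly as in \cite{FI10}; and (iii) the product bound $\prod_{w\le p<z,\, p\in\mathcal{P}}(1-g(p))^{-1}\le (\log z/\log w)(1+L/\log w)$ with $L\ge 1$ is precisely the sieve dimension condition with sifting dimension $1$. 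Taking $D=z^s$ and $s\ge 2$, Theorem~11.13 then yields
\begin{equation*}
  \mathcal{X} V(z)\bigl(f(s)+O((\log D)^{-1/6})\bigr)-R(D,z)
  \le S(\mathcal{A},z)
  \le \mathcal{X} V(z)\bigl(F(s)+O((\log D)^{-1/6})\bigr)+R(D,z),
\end{equation*}
with $V(z)=\prod_{p\in\mathcal{P},\, p\le z}(1-g(p))$ and $R(D,z)=\sum_{\ell\mid P(z),\, \ell\le D}|r_\ell|$, which is the claim, the error term $O((\log D)^{-1/6})$ being exactly the one supplied by \cite[Theorem~11.13]{FI10}. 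Here $F$ and $f$ are the classical upper and lower linear sieve functions, characterized by the normalizations $sF(s)=2e^\gamma$ on $1\le s\le 3$ and $sf(s)=0$ at $s=2$ together with the delay equations $(sF(s))'=f(s-1)$ for $s>3$ and $(sf(s))'=F(s-1)$ for $s>2$; they satisfy $F(s),f(s)\to 1$ as $s\to\infty$, so $F$ is bounded for $s>1$ while $f(s)>0$ strictly for $s>2$ (with $f(2)=0$), which is why the lower bound is only asserted for $s\ge 2$.

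If one instead wanted a self-contained argument, the route is the combinatorial sieve: starting from the Buchstab identity $S(\mathcal{A},z)=S(\mathcal{A},w)-\sum_{w\le p<z,\, p\in\mathcal{P}}S(\mathcal{A}_p,p)$ for $w<z$, one iterates, at each stage retaining or discarding terms according to the Rosser--Iwaniec sign pattern so that only products $\ell\mid P(z)$ with $\ell\le D$ survive in the main term. This produces a main term of the shape $\mathcal{X}\sum_{\ell\mid P(z),\,\ell\le D}\mu(\ell)g(\ell)(\text{truncation weight})$ whose value, after using hypothesis (iii) to compare partial products of $1-g(p)$ with $\log z/\log w$ over dyadic ranges, converges as $z,D\to\infty$ with $s=\log D/\log z$ fixed to $\mathcal{X} V(z)$ times the solution of the differential--difference system above, namely $F(s)$ for the upper-bound weights and $f(s)$ for the lower-bound weights; the discrepancy between the discrete combinatorial sum and this continuous limit contributes the $O((\log D)^{-1/6})$ error, and the dropped terms are bounded in absolute value by $R(D,z)$. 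The main obstacle in this self-contained route is the delicate bookkeeping in the iteration and the verification that the truncated sum reproduces precisely the Buchstab--de Bruijn functions $F,f$ with the stated error rate; since this is carried out in full in \cite{FI10}, the cleanest path is simply to invoke Theorem~11.13 there.
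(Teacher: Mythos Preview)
Your proposal is correct and matches the paper's approach: the paper simply quotes this lemma with the citation \cite[Theorem~11.13]{FI10} and gives no proof, so invoking that theorem after matching hypotheses is exactly what is intended. Your additional sketch of the self-contained Rosser--Iwaniec argument is accurate but unnecessary here, since the paper treats the result as a black box from the cited reference.
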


%When $s>2$ fixed, we have $f(s)> 0$.
% and %$F(s)$ monotonically decreasing tends to 1,
%$f(s)$ is monotonically increasing  and tending to 1.

\section{Proof of Theorem \ref{theorem:extremalvalue}} \label{sec:proof_EV}

Let $M\leq X^{\frac{1}{22}-\varepsilon}$ be large. We set $\mathcal{L}=\sqrt{\log M\log\log M}$ and choose the resonator coefficients $b^{\pm}(m)$ to be a multiplicative function with $b^{\pm}(p)=\frac{\pm a(p)\mathcal{L}}{\sqrt{p}\log p}$ for $\mathcal{L}^2\leq p \leq \exp((\log \mathcal{L})^2)$, and $b^{\pm}(p^a)=0$ for all other primes when $a=1$ and all primes when $a\geq 2$. Let
\begin{equation}\label{def:resonator}
  R^{\pm}(d) :=\sum_{\textrm{odd }m\leq M}b^{\pm}(m)\chi_d(m).
\end{equation}
Define the  congruence sums
\begin{equation}\label{congruencesums}
\begin{split}
  C_{\ell,a,\epsilon}^{\pm}(X)
  &:=\sum_{\substack{d\in \Omega(a,\epsilon)\\ \ell\mid d}}L\left(\frac{1}{2},E_d\right)R^{\pm}(d)^2 \
  \Phi\left(\frac{\epsilon d}{X}\right).
\end{split}
\end{equation}

\begin{lemma}\label{lemma:C}
  With the notations as above. We have
  \begin{equation*}
    C_{\ell,a,\epsilon}^{\pm}(X)
    =g_1^{\pm}(\ell)
     \mathcal{X}_1
     +r_{\ell,a,\epsilon}^{\pm}(X),
  \end{equation*}
  where
  \[
    \mathcal{X}_1 =
  \prod_{p\nmid N_0}\Big(1+b^{\pm}(p)^2\tilde{h}(p^2)
+2a(p)b^{\pm}(p)\tilde{h}(p)p^{-\frac{1}{2}}
\Big)
    \frac{2X}{N_0}\check{\Phi}(0)
L_a\left(\frac{1}{2}\right)L(1,\sym^2 E)C(E),
  \]
  $g_1^\pm(\ell)$ are multiplicative functions  supported on square-free numbers with
\begin{equation} \label{eqn:g1}
  g_1^{\pm}(p)=\frac{h(p)}{p}\Big(1
  +b^{\pm}(p)^2\tilde{h}(p^2)
+2a(p)b^{\pm}(p)\tilde{h}(p)p^{-\frac{1}{2}}
\Big)^{-1}
\end{equation}
  when $(p,N_0)=1$, and $g_1^{\pm}(p)=0$ otherwise, and
  \begin{equation}\label{eqn:R1}
r_{\ell,a,\epsilon}^{\pm}(X)
\ll g_1^{\pm}(\ell)
    \mathcal{X}_1 \exp\left(-\frac{\log M}{(\log\log M)^4}\right)
  + X^{\frac{7}{8}+\varepsilon}
  M^{\frac{11}{4}}\ell^{\frac{1}{4}}.
\end{equation}
\end{lemma}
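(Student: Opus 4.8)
The plan is to expand the square $R^{\pm}(d)^2 = \sum_{m_1,m_2} b^{\pm}(m_1)b^{\pm}(m_2)\chi_d(m_1 m_2)$ inside the definition of $C_{\ell,a,\epsilon}^{\pm}(X)$, interchange the (finite) sums, and apply Lemma~\ref{lemma:twisted-1-st-moment} with $u = m_1 m_2$ to each term $S(X; m_1 m_2, \ell)$. Since $b^{\pm}$ is supported on squarefree $m\leq M$ built from primes $p\in[\mathcal{L}^2,\exp((\log\mathcal{L})^2)]$, we have $m_1 m_2 \leq M^2$ and it is coprime to $N_0$, so the main term from Lemma~\ref{lemma:twisted-1-st-moment} contributes
\[
\frac{2X}{\ell N_0}\check\Phi(0)L_a\!\left(\tfrac12\right)L(1,\sym^2 E)\, C(E)\sum_{m_1,m_2} b^{\pm}(m_1)b^{\pm}(m_2)\frac{a((m_1m_2)_1)}{(m_1m_2)_1^{1/2}}\,\tilde h(m_1m_2)\,h(\ell),
\]
writing $m_1 m_2 = (m_1 m_2)_1 (m_1 m_2)_2^2$ with $(m_1 m_2)_1$ squarefree. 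Because $b^{\pm}$ is supported on squarefree numbers and vanishes on prime powers $p^a$ with $a\geq 2$, the double sum factors over primes: for each prime $p$ in the resonator range the local factor is $1 + b^{\pm}(p)^2 \tilde h(p^2) + 2 a(p) b^{\pm}(p)\tilde h(p) p^{-1/2}$ (the three cases being $p\nmid m_1 m_2$; $p\mid m_1$ and $p\mid m_2$, giving the $b^{\pm}(p)^2$ with the even-exponent $\tilde h(p^2)$; and $p$ dividing exactly one of them, giving the cross term with $a(p)$ and the odd-exponent $\tilde h(p)$). Outside the resonator range the local factor is $1$. This is precisely $\mathcal{X}_1$ divided by $\ell$ times $h(\ell)$, i.e. the claimed $g_1^{\pm}(\ell)\mathcal{X}_1$ after matching \eqref{eqn:g1}; multiplicativity of $g_1^{\pm}$ and its support on squarefree $\ell$ coprime to $N_0$ follow from the shape of $h$ and the hypothesis that $\ell$ is squarefree with $(\ell,N_0)=1$.

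The remainder $r_{\ell,a,\epsilon}^{\pm}(X)$ has two sources. The first is the accumulated error terms $O(X^{7/8+\varepsilon}(m_1 m_2)^{3/8}\ell^{1/4})$ from Lemma~\ref{lemma:twisted-1-st-moment}, summed over $m_1, m_2 \leq M$; using $|b^{\pm}(m)|\leq 1$ and $m_1 m_2\leq M^2$ this is $\ll X^{7/8+\varepsilon} M^{2}\cdot M^{3/4}\ell^{1/4} = X^{7/8+\varepsilon}M^{11/4}\ell^{1/4}$, which is the second term in \eqref{eqn:R1}. The second source is the tail: the product defining $\mathcal X_1$ is over \emph{all} $p\nmid N_0$, but the genuine main term only has nontrivial local factors at primes in $[\mathcal L^2, \exp((\log\mathcal L)^2)]$, so I need to check that extending to all primes changes things by a factor $1 + O(\exp(-\log M/(\log\log M)^4))$. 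Here one estimates $\prod_p (1 + b^{\pm}(p)^2\tilde h(p^2) + 2a(p)b^{\pm}(p)\tilde h(p)p^{-1/2})$ by taking logarithms; the dominant contribution is $\sum_p b^{\pm}(p)^2 = \mathcal L^2 \sum_{\mathcal L^2\le p\le \exp((\log\mathcal L)^2)} a(p)^2/(p\log^2 p)$, which by partial summation and Lemma~\ref{lemma:sumofap2} (the prime number theorem for $a(p)^2$) is of size comparable to $\log M/\log\log M$ up to lower order, and the relative error from truncating or from the cross terms is controlled by the power-saving error in Lemma~\ref{lemma:sumofap2}, giving the stated $\exp(-\log M/(\log\log M)^4)$ saving. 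Both pieces of the remainder are then bounded as in \eqref{eqn:R1}.

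The main obstacle is the bookkeeping in the Euler-product factorization: one must carefully track how $u = m_1 m_2$ decomposes into its squarefree kernel $u_1$ and square part $u_2^2$ as $p$ ranges over the three cases, and verify that the function $G_p(1; u, \ell) = C_p(E)\tilde h(p^k) h(p)^{[p\mid\ell]}$ combines with the coefficients $a(u_1)/u_1^{1/2}$ to produce exactly the local factor $1 + b^{\pm}(p)^2\tilde h(p^2) + 2a(p)b^{\pm}(p)\tilde h(p)p^{-1/2}$ — in particular that the case $p\mid u$ but $p\nmid u_1$ (which has $k=2$, contributing $\tilde h(p^2)$) matches the $b^{\pm}(p)^2$ term, while the case $p\mid u_1$ (which has $k=1$ and an extra $a(p)$ from $a(u_1)$ and a $p^{-1/2}$ from $u_1^{-1/2}$) matches the cross term. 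Once this identification is pinned down, the factorization of $g_1^{\pm}$ over $\ell$ and the separation of the $\ell$-dependence are immediate, and the rest is the routine error analysis sketched above.
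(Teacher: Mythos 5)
Your overall strategy is the paper's: expand $R^{\pm}(d)^2$, note that terms with $(m_1m_2,\ell)>1$ vanish since $\chi_d(m)=0$ for $(m,d)>1$, apply Lemma \ref{lemma:twisted-1-st-moment} termwise with $u=m_1m_2$, factor the resulting main term into the Euler product with local factors $1+b^{\pm}(p)^2\tilde h(p^2)+2a(p)b^{\pm}(p)\tilde h(p)p^{-1/2}$ (this is exactly the paper's $m_1=qr$, $m_2=qs$ decomposition), match the $\ell$-dependence with $g_1^{\pm}$, and bound the accumulated $O(X^{7/8+\varepsilon}(m_1m_2)^{3/8}\ell^{1/4})$ errors by $X^{7/8+\varepsilon}M^{11/4}\ell^{1/4}$. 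That part is fine.

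The gap is in your treatment of the first error term in \eqref{eqn:R1}. You frame the issue as ``extending the product to all primes $p\nmid N_0$,'' but that costs nothing: the local factor is identically $1$ whenever $b^{\pm}(p)=0$. The genuine issue is removing the truncation $m_1,m_2\le M$, i.e.\ comparing the restricted double sum with the full Euler product, and your claim that this ``is controlled by the power-saving error in Lemma \ref{lemma:sumofap2}'' is not a valid mechanism: the prime number theorem for $a(p)^2$ says nothing about the tail $m_1m_2$ large. What is needed is Rankin's trick: bound the tail (say $qr>M$, the case $qs>M$ by symmetry) by inserting $M^{-\alpha}(qr)^{\alpha}$ with a quantitative choice such as $\alpha=(\log\mathcal L)^{-3}$, which produces the shifted product $\prod_p\bigl(1+b^{\pm}(p)^2\tilde h(p^2)p^{\alpha}+|a(p)b^{\pm}(p)|\tilde h(p)p^{-1/2}(1+p^{\alpha})\bigr)$, and then show that the ratio of this (times $M^{-\alpha}$) to the \emph{signed} main product is $\ll\exp(-\alpha\log M+O(\text{prime sums}))\ll\exp(-\log M/(\log\log M)^4)$. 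Only at this last step does Lemma \ref{lemma:sumofap2} enter, to evaluate sums like $\sum a(p)^2\mathcal L/(p\log p)$ and $\sum a(p)^2\mathcal L^2/(p(\log p)^2)(p^{\alpha}-1)$ and check they are $o(\alpha\log M)$; the paper also uses the sign structure $|a(p)b^{\pm}(p)|=\pm a(p)b^{\pm}(p)$ to make the factor-by-factor comparison clean for the $-$ resonator, where the cross terms in the main product are negative. Without some such argument you have not justified that the error is small \emph{relative to} $g_1^{\pm}(\ell)\mathcal X_1$, which is the form of \eqref{eqn:R1} actually used later in the sieve step. Your arithmetic identification of the local factors and of $g_1^{\pm}$ via \eqref{eqn:g1} is correct, so the lemma follows once this Rankin-type tail estimate is supplied.
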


\begin{proof}
Let $M$ and $X$ be large enough such that all prime factors of $N_0$ is strictly small than $\mathcal{L}=\sqrt{\log M\log\log M}$. 
By \eqref{def:resonator}, \eqref{congruencesums}, and Lemma \ref{lemma:twisted-1-st-moment},
we have
\begin{equation*}%\label{congruencesums}
\begin{split}
  C_{\ell,a,\epsilon}^{\pm}(X)
  &=\underset{\substack{m_1,m_2\leq M\\(m_1m_2,\ell N_0)=1}} {\sum\sum}b^{\pm}(m_1)b^{\pm}(m_2)
  \sum_{\substack{d\in \Omega(a,\epsilon)\\ \ell\mid d\\(d,m_1m_2)=1}}
  L\left(\frac{1}{2},E_d\right) \chi_{d}(m_1m_2)
  \Phi\left(\frac{\epsilon d}{X}\right) \\
  &=\frac{h(\ell)}{\ell}\frac{2X}{N_0}
  \check{\Phi}(0)L_a\left(\frac{1}{2}\right)L(1,\sym^2 E)C(E)\\
  &\hskip 30 pt\times
  \underset{\substack{m_1,m_2\leq M\\(m_1m_2,\ell N_0)=1}}
  {\sum\sum}b^{\pm}(m_1)b^{\pm}(m_2)\tilde{h}(m_1m_2)
  \frac{a(\frac{m_1m_2}{(m_1,m_2)^2})}
  {(\frac{m_1m_2}{(m_1,m_2)^2})^{\frac{1}{2}}}
  +O(X^{\frac{7}{8}+\varepsilon}
  M^{\frac{11}{4}}\ell^{\frac{1}{4}}).
 \end{split}
\end{equation*}
 Here we have used the fact  $\chi_d(m)=0$ when $(m,d)\neq 1$.
Note that  $m_1,m_2$ in the above sums are square-free. We write $m_1=qr$, $m_2=qs$ with $(q,r)=(q,s)=(r,s)=1$, getting
\begin{multline}\label{eqn:sum_qrs1}
  \underset{\substack{m_1,m_2\leq M\\(m_1m_2,\ell N_0)=1}}
  {\sum\sum}b^{\pm}(m_1)b^{\pm}(m_2) \tilde{h}(m_1m_2)
  \frac{a(\frac{m_1m_2}{(m_1,m_2)^2})}
  {(\frac{m_1m_2}{(m_1,m_2)^2})^{\frac{1}{2}}} \\
  =  \underset{\substack{(q,r)=(q,s)=(r,s)=1
  \\qr,qs\leq M\\(qrs,\ell N_0)=1}}
  {\sum\sum\sum}b^{\pm}(q)^2\tilde{h}(q^2)a(r)
  b^{\pm}(r)\tilde{h}(r)
a(s)b^{\pm}(s)\tilde{h}(s)(rs)^{-\frac{1}{2}}.
  \end{multline}
The above sums  without the restrictions $qr,qs\leq M$ are
\begin{multline}\label{eqn:sum_qrs2}
  \underset{\substack{(q,r)=(q,s)=(r,s)=1
  \\(qrs,\ell N_0)=1}}
  {\sum\sum\sum}b^{\pm}(q)^2\tilde{h}(q^2)
  a(r)b^{\pm}(r)\tilde{h}(r)
a(s)b^{\pm}(s)\tilde{h}(s)(rs)^{-\frac{1}{2}}
  \\
  =\prod_{p\nmid \ell N_0}\Big(1+b^{\pm}(p)^2\tilde{h}(p^2)
+2a(p)b^{\pm}(p)\tilde{h}(p)p^{-\frac{1}{2}}
\Big).
\end{multline}
By using the symmetry of $qr$ and $qs$, the difference between \eqref{eqn:sum_qrs1} and \eqref{eqn:sum_qrs2} is bounded by
  \begin{equation*}
  2 \underset{\substack{(q,r)=(q,s)=(r,s)=1
  \\qr> M\\(qrs,\ell N_0)=1}}
  {\sum\sum\sum}|b^{\pm}(q)^2\tilde{h}(q^2)
  a(r)b^{\pm}(r)\tilde{h}(r)
  a(s)b^{\pm}(s)\tilde{h}(s)|
(rs)^{-\frac{1}{2}}.
\end{equation*}
By using Rankin's trick, for any $\alpha>0$, the above is
\begin{equation}\label{eqn:Rankinerror}
\begin{split}
  &\ll M^{-\alpha}\underset{\substack{(q,r)=(q,s)=(r,s)=1
  \\(qrs,\ell N_0)=1}}
  {\sum\sum\sum}|b^{\pm}(q)^2\tilde{h}(q^2)
  a(r)b^{\pm}(r)\tilde{h}(r)
  a(s)b^{\pm}(s)\tilde{h}(s)|
  q^{\alpha}r^{-\frac{1}{2}+\alpha}
s^{-\frac{1}{2}}\\
%\ll M^{-\alpha}\prod_{p\nmid vN_0}(1+a(p)b(p)g(p)p^{-\frac{1}{2}}) \underset{(q,r)=(qr,vN_0)=1}
%  {\sum\sum}b(q)^2 g(q^2) q^\alpha
% a(r) b(r)g(r) r^{-\frac{1}{2}+\alpha}\\
  &\ll M^{-\alpha}\prod_{p\nmid \ell N_0}\Big(1+b^{\pm}(p)^2\tilde{h}(p^2)p^{\alpha}
  \pm a(p)b^{\pm}(p)\tilde{h}(p)
  p^{-\frac{1}{2}}(1+p^{\alpha})\Big),
\end{split}
\end{equation}
because $|a(p)b^{\pm}(p)|=\pm a(p)b^{\pm}(p)$.
Taking $\alpha=(\log \mathcal{L})^{-3}$, we may see the ratio of the error term \eqref{eqn:Rankinerror} to the main term \eqref{eqn:sum_qrs2} is
\begin{equation}\label{eqn:ratioerror}
\begin{split}
  &\ll M^{-\alpha}\prod_{p\nmid \ell N_0}\Bigg(1+b^{\pm}(p)^2\tilde{h}(p^2)
  \Big(p^\alpha-1\Big)
  + a(p)b^{\pm}(p)\tilde{h}(p)
  p^{-\frac{1}{2}}\Big(\pm(1+p^{\alpha})-2\Big)
  \Bigg)\\
  &\ll \exp\Bigg(
  -\alpha\log M+\sum_{\mathcal{L}^2\leq p\leq \exp((\log \mathcal{L})^2)}
   \Bigg( \frac{a(p)^2 \mathcal{L}}{p\log p}\Big(\Big(\frac{\mathcal{L}}{\log p}+1\Big)\Big(p^\alpha-1\Big)+4\Big)\Bigg)
  \Bigg)\\
  & \ll \exp\Bigg( -\alpha\log M+
\sum_{\mathcal{L}^2\leq p\leq \exp((\log \mathcal{L})^2)}
   \Bigg( \frac{a(p)^2 \mathcal{L}}{p\log p}\Big(\frac{\mathcal{L}+\log p}{(\log \mathcal{L})^3}\Big(1+O\Big(\frac{\log p}{(\log \mathcal{L})^3}\Big)\Big)+4\Big)\Bigg)
  \Bigg) \\
  & \ll \exp\left(-\alpha\frac{\log M}{\log\log M}\right),
\end{split}
\end{equation}
with some calculation using the prime number theorem in Lemma \ref{lemma:sumofap2}. 
Thus
\begin{multline*}
   C_{\ell,a,\epsilon}^{\pm}(X)
   =\frac{h(\ell)}{\ell}
  \prod_{p\nmid \ell N_0}\Big(1+b^{\pm}(p)^2\tilde{h}(p^2)
+2a(p)b^{\pm}(p)\tilde{h}(p)p^{-\frac{1}{2}}
\Big)
    \frac{2X}{N_0}\check{\Phi}(0)
L_a\left(\frac{1}{2}\right)L(1,\sym^2 E)C(E)\\
\times
\left(1+O\Big(\exp(-\frac{\log M}{(\log\log M)^4})\Big)\right)
  +O\left(X^{\frac{7}{8}+\varepsilon}
  M^{\frac{11}{4}}\ell^{\frac{1}{4}}\right).
\end{multline*}
This completes the proof of the Lemma.
%Hence
%\begin{equation*}
%    C_{\ell,a,\epsilon}^{\pm}(X)
%    =g_1^{\pm}(\ell)
%     C_{1,a,\epsilon}^{\pm}(X)
%     +R_{\ell,a,\epsilon}^{\pm}(X).
%\end{equation*}
\end{proof}

\begin{lemma}\label{lemma:mean-value-1}
  Let $\mathcal{P}=\{p:p\textrm{ prime and }p\nmid N_0\}$.
  Let $X\geq 10$ be sufficiently large and $\varepsilon\in (0,1/1000)$ sufficiently small.
  Let $D\geq X^\varepsilon$ and $M\geq X^{\varepsilon}$ satisfy  that $D M^{\frac{11}{5}} \leq X^{\frac{1}{10}-\varepsilon}$.
  Then for $z \leq D^{1/2-\varepsilon}$, we have
  \begin{equation*}
   \sum_{\substack{d\in \Omega(a,\epsilon)\\(d,P(z))=1}}
  L\left(\frac{1}{2},E_d\right) R^{+}(d)^2
  \Phi\left(\frac{\epsilon d}{X}\right)
   \\
  \gg \frac{X}{\log z}\prod_{p\nmid N_0}\Big(1+b^{+}(p)^2\tilde{h}(p^2)
+2a(p)b^{+}(p)\tilde{h}(p)p^{-\frac{1}{2}}
\Big),
\end{equation*}
\begin{equation*}
   \sum_{\substack{d\in \Omega(a,\epsilon)\\(d,P(z))=1}}
  L\left(\frac{1}{2},E_d\right) R^{-}(d)^2
  \Phi\left(\frac{\epsilon d}{X}\right)
   \\
  \ll \frac{X}{\log z}\prod_{p\nmid N_0}\Big(1+b^{-}(p)^2\tilde{h}(p^2)
+2a(p)b^{-}(p)\tilde{h}(p)p^{-\frac{1}{2}}
\Big).
\end{equation*}
\end{lemma}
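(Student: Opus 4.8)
The plan is to apply the linear sieve of Lemma~\ref{lemma:sieve} to the sequence $\mathcal{A}=(a_d)$ given by $a_d = L(\tfrac12,E_d) R^{\pm}(d)^2 \Phi(\epsilon d/X)$ for $d\in\Omega(a,\epsilon)$ (and $a_d=0$ otherwise), with $\mathcal{P}=\{p:p\nmid N_0\}$. First I would set $|\mathcal{A}_\ell| = C_{\ell,a,\epsilon}^{\pm}(X)$, so that Lemma~\ref{lemma:C} already gives exactly the decomposition $|\mathcal{A}_\ell| = g_1^{\pm}(\ell)\mathcal{X}_1 + r_{\ell,a,\epsilon}^{\pm}(X)$ required by the sieve setup, with the sieve density $g = g_1^{\pm}$ and approximant $\mathcal{X} = \mathcal{X}_1$. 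Note $\mathcal{X}_1$ is of the shape (positive constant)$\cdot X \cdot \prod_{p\nmid N_0}(1 + b^{\pm}(p)^2\tilde h(p^2) + 2a(p)b^{\pm}(p)\tilde h(p)p^{-1/2})$, which is precisely the quantity appearing on the right-hand side of the claimed bounds, so matching the final form reduces to controlling $V(z)$ and the sieve functions.

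Second I would verify the linear sieve hypothesis: that $\prod_{w\le p<z,\,p\in\mathcal{P}}(1-g_1^{\pm}(p))^{-1} \le (\log z/\log w)(1+L/\log w)$ for an absolute $L\ge1$. Since $g_1^{\pm}(p) = h(p)/p \cdot (1+O(1/\sqrt p))^{-1}$ and $h(p)=1+O(1/p)$, we have $g_1^{\pm}(p) = 1/p + O(p^{-3/2})$ (the $b^{\pm}(p)$ contribution is supported on $\mathcal{L}^2\le p\le\exp((\log\mathcal{L})^2)$ and is $O(\mathcal{L}/(\sqrt p\log p))$ there, still summing to something $O(1)$, so harmless for the product bound); Mertens' theorem then gives the required inequality with a suitable constant $L$. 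This also yields $V(z)=\prod_{p\le z,\,p\in\mathcal{P}}(1-g_1^{\pm}(p)) \asymp 1/\log z$ by Mertens. I would take $s=\log D/\log z$: for the lower bound ($R^+$) I need $s>2$ so that $f(s)>0$; for the upper bound ($R^-$) I just use $F(s)=O(1)$ for $s>1$. Under $z\le D^{1/2-\varepsilon}$ we have $s\ge 2+c$, so $f(s)$ is bounded below by a positive constant.

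Third, the remainder term $R(D,z) = \sum_{\ell\mid P(z),\,\ell\le D}|r_{\ell,a,\epsilon}^{\pm}(X)|$ must be shown to be negligible compared to $\mathcal{X}_1 V(z) \asymp X/\log z$. Using \eqref{eqn:R1}, the first piece $\sum_{\ell\le D} g_1^{\pm}(\ell)\mathcal{X}_1\exp(-\log M/(\log\log M)^4)$ is $\ll \mathcal{X}_1 (\log D)\exp(-\log M/(\log\log M)^4)$, which (since $M\ge X^\varepsilon$, so $\log M\gg\log X\gg\log D$) is $o(\mathcal{X}_1/\log z)$. The second piece $\sum_{\ell\le D} X^{7/8+\varepsilon}M^{11/4}\ell^{1/4} \ll X^{7/8+\varepsilon}M^{11/4}D^{5/4}$; the hypothesis $DM^{11/5}\le X^{1/10-\varepsilon}$, i.e. $D^{5/4}M^{11/4}\le X^{1/8 - 5\varepsilon/4}$, makes this $\ll X^{7/8+\varepsilon}\cdot X^{1/8-5\varepsilon/4} = X^{1 - \varepsilon/4} = o(X/\log z)$. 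Finally I would note $\mathcal{X}_1 \gg X\prod_{p\nmid N_0}(1+b^{\pm}(p)^2\tilde h(p^2)+2a(p)b^{\pm}(p)\tilde h(p)p^{-1/2})$ (the constant $L_a(\tfrac12)L(1,\sym^2E)C(E)>0$), and that for the $R^-$ upper bound the product factors are $\le 1+O(\mathcal{L}/(\sqrt p\log p))$ hence the product is $O(1)$-comparable to the stated one; plugging $f(s), F(s) = \Theta(1)$ and $V(z)\asymp 1/\log z$ into Lemma~\ref{lemma:sieve} gives both displayed inequalities, with $S(\mathcal{A},z)$ being exactly the sum over $d$ with $(d,P(z))=1$.

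I expect the main obstacle to be bookkeeping rather than conceptual: carefully checking that the $b^{\pm}(p)$-terms in $g_1^{\pm}$ do not spoil the sieve axiom (they are nonzero only on a short range of $p$ and contribute a bounded amount to the relevant logarithmic sums, by Lemma~\ref{lemma:sumofap2}), and tracking the exponents in the remainder so the constraint $DM^{11/5}\le X^{1/10-\varepsilon}$ is exactly what is needed. One subtlety worth stating explicitly: for the lower bound one must ensure $\mathcal{X}_1>0$ and that the sieve main term $\mathcal{X}_1 V(z) f(s)$ genuinely dominates $R(D,z)$ with room to spare, which is why the theorem is stated with $\gg$ rather than an asymptotic.
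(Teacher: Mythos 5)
Your proposal is correct and follows essentially the same route as the paper: you feed the decomposition $C^{\pm}_{\ell,a,\epsilon}(X)=g_1^{\pm}(\ell)\mathcal{X}_1+r^{\pm}_{\ell,a,\epsilon}(X)$ from Lemma~\ref{lemma:C} into the linear sieve of Lemma~\ref{lemma:sieve}, verify the density hypothesis for $g_1^{\pm}$, bound $R(D,z)$ using $DM^{11/5}\le X^{1/10-\varepsilon}$, and conclude with $V(z)\asymp 1/\log z$, exactly as in the paper. Two small imprecisions to fix in the write-up: the claim $g_1^{\pm}(p)=1/p+O(p^{-3/2})$ is false on the resonator range, where the correct statement is $g_1^{\pm}(p)=\frac{1}{p}\big(1+O(1/(\log p)^2)\big)$, obtained (as in the paper) by using $p\ge\mathcal{L}^2$ to bound $b^{\pm}(p)^2\tilde h(p^2)+2a(p)b^{\pm}(p)\tilde h(p)p^{-1/2}=O(1/(\log p)^2)$; this still yields the axiom since $\sum_{p\ge w}1/(p(\log p)^2)\ll 1/\log w$. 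Also, for the $R^{-}$ upper bound the remainder must be negligible against $\mathcal{X}_1 V(z)$ rather than against $X/\log z$; this does follow from your exponent count because the $b^{-}$ Euler product is only sub-polynomially small, so $\mathcal{X}_1\gg X^{1-o(1)}$ while $R(D,z)\ll X^{1-\varepsilon/4}$.
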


\begin{proof}
Recall $0<h(p), \tilde{h}(p), \tilde{h}(p^2)\leq 1$, $|a(p)|\leq 2$.
For any $z,w$ we have
\begin{equation}
\begin{split}
 \prod_{\substack{w\leq p<z\\p\in \mathcal{P}}}(1-g_1^{\pm}(p))^{-1}
  &=\prod_{\substack{w\leq p<z\\p\nmid N_0}} \Bigg(1+\frac{h(p)}{p+b^{\pm}(p)^2\tilde{h}(p^2)p
+2a(p)b^{\pm}(p)\tilde{h}(p)
p^{\frac{1}{2}} -h(p)}\Bigg)\\
&\leq \prod_{\substack{w\leq p<z\\p\nmid N_0\\ b^{\pm}(p)=0}} \Bigg(1+\frac{1}{p-1}\Bigg)
\prod_{\substack{w\leq p<z\\p\nmid N_0\\ b^{\pm}(p)\neq 0}} \Bigg(1+\frac{1}{p-\frac{4\mathcal{L}}{\log p}-\frac{4\mathcal{L}^2}{(\log p)^2}-1}\Bigg)\\
&\leq \prod_{\substack{w\leq p<z\\p\nmid N_0}} \Bigg(1+\frac{1}{p-1}\Bigg)
\prod_{\substack{w\leq p<z\\ b^{\pm}(p)\neq 0}} \Bigg(1+\frac{\frac{4\mathcal{L}}{\log p}+\frac{4\mathcal{L}^2}{(\log p)^2}}{p^2}\Big(1+O\Big(\frac{\mathcal{L}}{p\log p}+\frac{\mathcal{L}^2}{p(\log p)^2}\Big)\Big)\Bigg)\\
&\leq \prod_{w\leq p<z} \Bigg(1+\frac{1}{p-1}\Bigg)
\prod_{\substack{w\leq p<z\\b^{\pm}(p)\neq 0}} \Bigg(1+\frac{4}{p(\log p)^2}\Big(1+O\Big(\frac{1}{(\log p)^2}\Big)\Big)\Bigg)\\
&\leq \Big(\frac{\log z}{\log w}\Big)\Big(1+\frac{L_1}{\log w}\Big)
  \end{split}
\end{equation}
for some absolute constant $L_1\geq 1$.
%For the first inequality above, we have used came from value ranges of $h(p), \tilde{h}(p), \tilde{h}(p^2), a(p)$, and $(p,N_0)=1$ when $b^{\pm}(p)\neq 0$, and
For the third inequality, we have used the fact $p\geq \mathcal{L}^2$ when $b^{\pm}(p)\neq 0$.
We will apply Lemma \ref{lemma:sieve} with
\begin{equation}
  a_n=\left\{\begin{aligned}
  &L\Big(\frac{1}{2},E_{\epsilon n}\Big)  R^{\pm}(\epsilon n) ^2 \Phi\Big(\frac{n}{X}\Big), &\epsilon n=d \in \Omega(a,\epsilon),\\
  &0, & \textrm{otherwise},
    \end{aligned}
  \right.
\end{equation}
$g(\ell)=g_1^{\pm}(\ell)$, and $r_\ell = r_{\ell,a,\epsilon}^{\pm}(X)$ in Lemma \ref{lemma:C}.
Note that we have
\begin{equation*}
  \sum_{\substack{\ell\leq D\\ \mu(\ell)^2=1}}|r_{\ell,a,\epsilon}^{\pm}(X)|
  = O(\mathcal{X}_1 (\log X)^{-A})
\end{equation*}
for any large $A$, when $D M^{\frac{11}{5}} \leq X^{\frac{1}{10}-\varepsilon}$ and $M\geq X^{\varepsilon}$.
Assume $z \leq D^{1/2-\varepsilon}$. Then  $\frac{\log D}{\log z}>2$.
By using Lemma \ref{lemma:sieve}
with $P(z)=\prod_{\substack{p<z\\p\nmid N_0}}p$, we have
\begin{align*}
   \sum_{\substack{d\in \Omega(a,\epsilon)\\(d,P(z))=1}}
  L\left(\frac{1}{2},E_d\right) R^{+}(d)^2
  \Phi\left(\frac{\epsilon d}{X}\right)
  & \gg \mathcal{X}_1
  \prod_{p<z}(1-g_1^{+}(p))\\
  & \gg \frac{X}{\log z}\prod_{p\nmid N_0}\Big(1+b^{+}(p)^2\tilde{h}(p^2)
+2a(p)b^{+}(p)\tilde{h}(p)p^{-\frac{1}{2}}
\Big),
\end{align*}
and
\begin{align*}
   \sum_{\substack{d\in \Omega(a,\epsilon)\\(d,P(z))=1}}
  L\left(\frac{1}{2},E_d\right) R^{-}(d)^2
  \Phi\left(\frac{\epsilon d}{X}\right)
  & \ll \mathcal{X}_1
  \prod_{p<z}(1-g_1^{-}(p))\\
  & \ll \frac{X}{\log z}\prod_{p\nmid N_0}\Big(1+b^{-}(p)^2\tilde{h}(p^2)
+2a(p)b^{-}(p)\tilde{h}(p)p^{-\frac{1}{2}}
\Big).
\end{align*}
Here we have used the fact $\prod_{p<z}(1-g_1^{-}(p)) \asymp 1/\log z$.
This completes the proof of the Lemma.
\end{proof}

\begin{lemma}\label{lemma:mean-value-0}
  Let $\mathcal{P}=\{p:p\textrm{ prime and }p\nmid N_0\}$.
  Let $X\geq 10$ be sufficiently large and $\varepsilon\in (0,1/1000)$ sufficiently small.
  Let $D\geq X^\varepsilon$ and $M\geq X^{\varepsilon}$  satisfy $D M^{3} \leq X^{\frac{1}{2}-\varepsilon}$.
  Then for $ z \leq D^{1/2-\varepsilon}$, we have
  \begin{equation*}
   \sum_{\substack{d\in \Omega(a,\epsilon)\\(d,P(z))=1}}
  R^{+}(d)^2
  \Phi\left(\frac{\epsilon d}{X}\right)
  \ll \frac{X}{\log z}\prod_{p\nmid N_0}\Big(1+b^{+}(p)^2\frac{p}{p+1}\Big),
\end{equation*}
\begin{equation*}
   \sum_{\substack{d\in \Omega(a,\epsilon)\\(d,P(z))=1}}
  R^{-}(d)^2
  \Phi\left(\frac{\epsilon d}{X}\right)
  \gg \frac{X}{\log z}\prod_{p\nmid N_0}
  \Big(1+b^{-}(p)^2\frac{p}{p+1}\Big).
\end{equation*}
\end{lemma}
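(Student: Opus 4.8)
The plan is to repeat the proof of Lemma~\ref{lemma:mean-value-1} almost word for word, with the twisted first moment (Lemma~\ref{lemma:twisted-1-st-moment}) replaced throughout by the plain character average (Lemma~\ref{lemma:char-moment}); the only genuinely new input is the analogue of the congruence-sum expansion in Lemma~\ref{lemma:C}, after which one simply quotes the linear sieve (Lemma~\ref{lemma:sieve}).

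First I would establish: for squarefree $\ell$ with $(\ell,N_0)=1$,
\[
  \sum_{\substack{d\in \Omega(a,\epsilon)\\ \ell\mid d}}R^{\pm}(d)^2\Phi\Bigl(\frac{\epsilon d}{X}\Bigr)=g_0^{\pm}(\ell)\,\mathcal{X}_0+r^0_{\ell},
\]
where $\mathcal{X}_0=\check{\Phi}(0)\frac{X}{N_0}\prod_{p\nmid N_0}(1-\frac1{p^2})\prod_{p\nmid N_0}\bigl(1+b^{\pm}(p)^2\frac{p}{p+1}\bigr)$, $g_0^{\pm}$ is multiplicative, supported on squarefrees, with $g_0^{\pm}(p)=\frac1{p+1}\bigl(1+b^{\pm}(p)^2\frac{p}{p+1}\bigr)^{-1}$ for $(p,N_0)=1$ and $g_0^{\pm}(p)=0$ otherwise, and $r^0_{\ell}\ll g_0^{\pm}(\ell)\mathcal{X}_0\exp(-\log M/(\log\log M)^4)+X^{1/2+\varepsilon}M$ uniformly in $\ell$. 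To prove this, expand $R^{\pm}(d)^2=\sum_{m_1,m_2\le M}b^{\pm}(m_1)b^{\pm}(m_2)\chi_d(m_1m_2)$, interchange summation, and apply Lemma~\ref{lemma:char-moment} with $n=m_1m_2$ to the inner sum over $d$ coprime to $m_1m_2$. As $b^{\pm}$ is supported on squarefrees, $m_1m_2$ is a perfect square only when $m_1=m_2$, so only the diagonal produces a main term; using $\prod_{p\mid m^2\ell}(1+\frac1p)^{-1}=\prod_{p\mid m\ell}(1+\frac1p)^{-1}$ it equals $\check{\Phi}(0)\frac{X}{\ell N_0}\prod_{p\mid\ell}(1+\frac1p)^{-1}\prod_{p\nmid N_0}(1-\frac1{p^2})\sum_{m\le M,\,(m,\ell N_0)=1}b^{\pm}(m)^2\prod_{p\mid m}(1+\frac1p)^{-1}$. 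The off-diagonal terms together with all the $O(X^{1/2+\varepsilon}\sqrt{m_1m_2})$ errors of Lemma~\ref{lemma:char-moment} contribute $\ll X^{1/2+\varepsilon}\bigl(\sum_{m\le M}|b^{\pm}(m)|\sqrt m\bigr)^2\ll X^{1/2+\varepsilon}M$, where $\sum_{m\le M}|b^{\pm}(m)|\le\prod_p(1+|b^{\pm}(p)|)=M^{o(1)}$ follows from the support of $b^{\pm}$ together with Lemma~\ref{lemma:sumofap2}, exactly as in the proof of Lemma~\ref{lemma:C}. Finally, completing the remaining $m$-sum to all $m$ by Rankin's trick with $\alpha=(\log\mathcal{L})^{-3}$, verbatim as in \eqref{eqn:Rankinerror}--\eqref{eqn:ratioerror}, replaces $\sum_{m\le M,\,(m,\ell N_0)=1}b^{\pm}(m)^2\prod_{p\mid m}(1+\frac1p)^{-1}$ by $\prod_{p\nmid\ell N_0}\bigl(1+b^{\pm}(p)^2\frac{p}{p+1}\bigr)$ up to a factor $1+O(\exp(-\log M/(\log\log M)^4))$, and this rearranges into the displayed identity.

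Next I would feed this into Lemma~\ref{lemma:sieve}. Since $b^{\pm}(p)\ne0$ forces $p\ge\mathcal{L}^2$, hence $0\le b^{\pm}(p)^2\frac{p}{p+1}\le b^{\pm}(p)^2\le a(p)^2/(4(\log\mathcal{L})^2)$, one has $g_0^{\pm}(p)=\frac1{p+1}\bigl(1+O((\log\mathcal{L})^{-2})\bigr)$ uniformly in $p$; from $1-g_0^{\pm}(p)\ge\frac{p}{p+1}$ and Mertens, $\prod_{w\le p<z,\,p\in\mathcal{P}}(1-g_0^{\pm}(p))^{-1}\le\prod_{w\le p<z}(1+\frac1p)\le\frac{\log z}{\log w}(1+\frac{L_0}{\log w})$ for an absolute $L_0\ge1$, and from $1-g_0^{\pm}(p)=\frac{p}{p+1}(1+O(\frac1{p(\log\mathcal{L})^2}))$ one gets $V(z):=\prod_{p\le z,\,p\in\mathcal{P}}(1-g_0^{\pm}(p))\asymp1/\log z$. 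Apply Lemma~\ref{lemma:sieve} with $a_n=R^{\pm}(\epsilon n)^2\Phi(n/X)$ when $\epsilon n=d\in\Omega(a,\epsilon)$ and $a_n=0$ otherwise (these are non-negative), $g=g_0^{\pm}$, $\mathcal{X}=\mathcal{X}_0$, $r_\ell=r^0_\ell$, and $P(z)=\prod_{p<z,\,p\nmid N_0}p$. Under $D\ge X^{\varepsilon}$, $M\ge X^{\varepsilon}$, $DM^{3}\le X^{1/2-\varepsilon}$ one checks (using $\sum_{\ell\le D}g_0^{\pm}(\ell)\ll\log D$ for the first piece and $DM\le X^{1/2-\varepsilon}$ with a slightly smaller $\varepsilon$ inside Lemma~\ref{lemma:char-moment} for the second, together with $\mathcal{X}_0\gg X$) that $R(D,z)=\sum_{\ell\le D,\,\ell\mid P(z)}|r^0_\ell|\ll\mathcal{X}_0(\log X)^{-A}+DX^{1/2+\varepsilon}M=o(\mathcal{X}_0/\log z)$; and for $z\le D^{1/2-\varepsilon}$ we have $s=\log D/\log z>2$, so $F(s)=O(1)$ and $f(s)>0$. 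The sieve upper bound then yields $\sum_{d\in\Omega(a,\epsilon),\,(d,P(z))=1}R^{+}(d)^2\Phi(\epsilon d/X)\ll\mathcal{X}_0V(z)\ll\frac{X}{\log z}\prod_{p\nmid N_0}(1+b^{+}(p)^2\frac{p}{p+1})$, and the sieve lower bound yields $\sum_{d\in\Omega(a,\epsilon),\,(d,P(z))=1}R^{-}(d)^2\Phi(\epsilon d/X)\gg\mathcal{X}_0V(z)\gg\frac{X}{\log z}\prod_{p\nmid N_0}(1+b^{-}(p)^2\frac{p}{p+1})$, which are the two asserted bounds.

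The one point that requires real care is, as in Lemma~\ref{lemma:C}, controlling the Rankin-trick tail and summing $|r^0_\ell|$ over $\ell\le D$ with enough uniformity in $\ell$ that $R(D,z)$ is of strictly smaller order than the sieved main term $\mathcal{X}_0V(z)\asymp\mathcal{X}_0/\log z$; this is precisely where the estimate $\sum_{m\le M}|b^{\pm}(m)|=M^{o(1)}$ (via Lemma~\ref{lemma:sumofap2}) and the hypotheses $M\ge X^{\varepsilon}$ and $DM^{3}\le X^{1/2-\varepsilon}$ enter. Everything else is a transcription of computations already carried out in the proofs of Lemma~\ref{lemma:C} and Lemma~\ref{lemma:mean-value-1}.
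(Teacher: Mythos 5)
Your overall route is exactly the paper's: expand $R^{\pm}(d)^2$, apply Lemma \ref{lemma:char-moment} so that only the diagonal $m_1=m_2$ survives as a main term, complete the $m$-sum by Rankin's trick with $\alpha=(\log\mathcal{L})^{-3}$, read off the sieve density $g(p)=\frac{1}{p+1}\bigl(1+b^{\pm}(p)^2\frac{p}{p+1}\bigr)^{-1}$ (your $g_0^{\pm}$ is the paper's $g_2^{\pm}$, and your $\mathcal{X}_0$ is the paper's $\mathcal{X}_2$), verify the linear-sieve hypothesis together with $V(z)\asymp 1/\log z$, and invoke Lemma \ref{lemma:sieve} for the upper bound at $R^{+}$ and the lower bound at $R^{-}$. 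All of this matches the paper's proof.

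The one step that fails as written is your bound for the remainder. You claim $\sum_{m\le M}|b^{\pm}(m)|\le\prod_p(1+|b^{\pm}(p)|)=M^{o(1)}$ ``exactly as in the proof of Lemma \ref{lemma:C}''; no such estimate appears there, and the claim is false. Since $|b^{\pm}(p)|\asymp\mathcal{L}/(\sqrt{p}\log p)$ on $\mathcal{L}^2\le p\le\exp((\log\mathcal{L})^2)$, the sum $\sum_p|b^{\pm}(p)|$ is of size about $\mathcal{L}\exp\bigl(\tfrac12(\log\mathcal{L})^2\bigr)(\log\mathcal{L})^{-4}$, which is far larger than $\log M$, so $\prod_p(1+|b^{\pm}(p)|)$ is not even $M^{O(1)}$; only the square sum is small, namely $\sum_m b^{\pm}(m)^2\le\prod_p(1+b^{\pm}(p)^2)=M^{o(1)}$ by Lemma \ref{lemma:sumofap2}, and Cauchy--Schwarz then gives $\sum_{m\le M}|b^{\pm}(m)|\ll M^{1/2+o(1)}$, not $M^{o(1)}$. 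Hence your claimed error $X^{1/2+\varepsilon}M$ in $r^0_\ell$, and the relaxed requirement ``$DM\le X^{1/2-\varepsilon}$'' you use when summing over $\ell\le D$, are not justified. The repair is immediate and costs nothing: bound the off-diagonal and error contributions by $X^{1/2+\varepsilon}\bigl(\sum_{m\le M}|b^{\pm}(m)|\sqrt{m}\bigr)^2\ll X^{1/2+\varepsilon}M^{3}$ trivially (this is what the paper does, and it is why the lemma assumes $DM^{3}\le X^{1/2-\varepsilon}$), or by $X^{1/2+\varepsilon}M^{2+o(1)}$ via the Cauchy--Schwarz bound above; either way $\sum_{\ell\le D}|r^0_\ell|\ll\mathcal{X}_0(\log X)^{-A}=o(\mathcal{X}_0/\log z)$ under the stated hypotheses, and the rest of your argument goes through unchanged.
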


\begin{proof}
Define the congruence sums
\begin{equation}\label{eqn:D}
  D_{\ell,a,\epsilon}^{\pm}(X)
  :=\sum_{\substack{d\in \Omega(a,\epsilon)\\ \ell\mid d}}
  R^{\pm}(d)^2 \Phi\left(\frac{\epsilon d}{X}\right).
\end{equation}
By \eqref{def:resonator} and Lemma \ref{lemma:char-moment}, we have
\begin{equation}
\begin{split}
  D_{\ell,a,\epsilon}^{\pm}(X)
  &=\sum_{m_1,m_2\leq M}
  b^{\pm}(m_1)b^{\pm}(m_2) \sum_{\substack{d\in \Omega(a,\epsilon)\\ \ell\mid d}}
  \chi_d(m_1m_2)\Phi\left(\frac{\epsilon d}{X}\right)\\
  &=\prod_{p\mid \ell} \frac{1}{p+1}\frac{X}{N_0\zeta(2)}\check{\Phi}(0)
\prod_{p\mid N_0} \left(1-\frac{1}{p^2}\right)^{-1}
  \sum_{\substack{m\leq M\\(m,\ell N_0=1)}}b^{\pm}(m)^2
  \prod_{p\mid m}\frac{p}{p+1}
  +O\left(M^3X^{\frac{1}{2}+\varepsilon}\right).
  \end{split}
\end{equation}
Rankin's trick shows that for any $\alpha>0$,
we have
\begin{equation}
  \sum_{\substack{m\leq M\\(m,\ell N_0=1)}}b^{\pm}(m)^2
  \prod_{p\mid m}\frac{p}{p+1}
  =\prod_{p\nmid \ell N_0}
  \left(1+b^{\pm}(p)^2\frac{p}{p+1}\right)
  +O\left(M^{-\alpha}\prod_{p\nmid \ell N_0}
  (1+b^{\pm}(p)^2p^\alpha\frac{p}{p+1})\right).
\end{equation}
Take $\alpha=(\log \mathcal{L})^{-3}$. Then by using Lemma \ref{lemma:sumofap2}, we see the ratio of the above error term  to the main term is
\begin{equation}\label{eqn:ratioerror}
\begin{split}
  &\ll M^{-\alpha}\prod_{p\nmid \ell N_0}
  \Bigg(1+b^{\pm}(p)^2p^\alpha\frac{p}{p+1}
  \Big(p^{\alpha}-1\Big)\Bigg)
  \\
  &\ll \exp\Bigg(
  -\alpha\log M+\sum_{\mathcal{L}^2\leq p\leq \exp((\log \mathcal{L})^2)}\Big( \frac{a(p)^2\mathcal{L}^2}
  {p(\log p)^2}\Big(p^\alpha-1\Big)\Big)
  \Bigg)\\
  & \ll \exp\Bigg(
  -\alpha\log M+\frac{\mathcal{L}^2}{(\log \mathcal{L})^3}\sum_{\mathcal{L}^2\leq p\leq \exp((\log \mathcal{L})^2)}
  \frac{a(p)^2}{p\log p}\Big(1+O\Big(\frac{\log p}{(\log \mathcal{L})^3}\Big)\Big)
  \Bigg)\\
     & \ll \exp\left(-\alpha\frac{\log M}{\log\log M}\right).
\end{split}
\end{equation}
Then we have
\begin{equation}
\begin{split}
  D_{\ell,a,\epsilon}^{\pm}(X)
  &=\prod_{p\mid \ell} \frac{1}{p+1} \frac{X}{N_0\zeta(2)}\check{\Phi}(0)
\prod_{p\mid N_0} \left(1-\frac{1}{p^2}\right)^{-1}
\prod_{p\nmid \ell N_0}
  \left(1+b^{\pm}(p)^2\frac{p}{p+1}\right) \\
  &\hskip 60 pt  \times
  \left(1+O \Big(\exp\Big(-\frac{\log M}{(\log\log M)^4}\Big)\Big)\right)
  +O\left(M^3X^{\frac{1}{2}+\varepsilon}\right).
 \end{split}
\end{equation}
We define the multiplicative functions $g_2^{\pm}$ supported on square-free numbers as
\begin{equation*}
  g_2^{\pm}(p) = \frac{1}{p+1}
  \left(1+b^{\pm}(p)^2\frac{p}{p+1}\right)^{-1},
\end{equation*}
when $(p,N_0)=1$, and $g_2^{\pm}(p)=0$ otherwise.
Hence
\begin{equation*}
    D_{\ell,a,\epsilon}^{\pm}(X)
    =g_2^{\pm}(\ell) \mathcal{X}_2
     +r_{2,\ell}^{\pm}(X),
\end{equation*}
     where
\[
  \mathcal{X}_2 =
  \frac{X}{N_0\zeta(2)}\check{\Phi}(0)
\prod_{p\mid N_0} \left(1-\frac{1}{p^2}\right)^{-1}
\prod_{p\nmid  N_0}
  \left(1+b^{\pm}(p)^2\frac{p}{p+1}\right) ,
\]
and
\begin{equation*}
  r_{2,\ell}^{\pm}(X)
  = O\left( g_2^{\pm}(\ell)
      \mathcal{X}_2 \exp\Big(-\frac{\log M}{(\log\log M)^4}\Big)
  + M^3X^{\frac{1}{2}+\varepsilon}\right).
\end{equation*}

Let $\mathcal{P}=\{p:p\textrm{ prime and }p\nmid N_0\}$, recall $|a(p)|\leq 2$, then for any $z>w\geq2$ we have
\begin{equation}
\begin{split}
  \prod_{\substack{w\leq p<z\\p\in \mathcal{P}}}(1-g_2^{\pm}(p))^{-1}
  &=\prod_{\substack{w\leq p<z\\p\nmid N_0}} \Big(1+\frac{1}{p+b^{\pm}(p)^2p}\Big) \\
&\leq \prod_{\substack{w\leq p<z\\p\nmid N_0}} \Big(1+\frac{1}{p}\Big) \\
&\leq \left(\frac{\log z}{\log w}\right) \left(1+\frac{L_2}{\log w}\right)
  \end{split}
\end{equation}
for some absolute constant $L_2\geq 1$.
We will apply Lemma \ref{lemma:sieve} with
\begin{equation}
  a_n=\left\{\begin{aligned}
  &  R^{\pm}(\epsilon n) ^2 \Phi\left(\frac{n}{X}\right), &\epsilon n=d \in \Omega(a,\epsilon),\\
  &0, &otherwise,
    \end{aligned}
  \right.
\end{equation}
$g(\ell)=g_2^{\pm}(\ell)$, and $r_\ell=r_{2,\ell}^{\pm}(X)$.
We have
\begin{equation*}
  \sum_{\substack{\ell\leq D\\ \mu(\ell)^2=1}}|r_{2,\ell}^{\pm}(X)|
  = O\left( \mathcal{X}_2 (\log X)^{-A} \right)
\end{equation*}
for any large $A$, when $D\leq X^{\frac{1}{2}-\varepsilon}M^{-3}$ and $M\geq X^\varepsilon$.
Assume $z\leq D^{1/2-\varepsilon}$. Then $\log D/ \log z >2$.
By Lemma \ref{lemma:sieve} with $P(z)=\prod_{\substack{p<z\\p\nmid N_0}}p$, we have
\begin{equation}
   \sum_{\substack{d\in \Omega(a,\epsilon)\\(d,P(z))=1}}
  R^{+}(d)^2
  \Phi\left(\frac{\epsilon d}{X}\right)
  \ll \mathcal{X}_2
  \prod_{p<z}(1-g_2^{+}(p))
  \ll \frac{X}{\log z}\prod_{p\nmid N_0}\Big(1+b^{+}(p)^2\frac{p}{p+1}\Big),
\end{equation}
and
\begin{equation}
   \sum_{\substack{d\in \Omega(a,\epsilon)\\(d,P(z))=1}}
  R^{-}(d)^2
  \Phi\left(\frac{\epsilon d}{X}\right)
  \gg \mathcal{X}_2
  \prod_{p<z}(1-g_2^{-}(p))
  \gg \frac{X}{\log z}\prod_{p\nmid N_0}
  \Big(1+b^{-}(p)^2\frac{p}{p+1}\Big),
\end{equation}
as claimed. Here we have used the fact $\prod_{p<z}(1-g_2^{-}(p)) \asymp 1/\log z$.
\end{proof}

Now we are ready to prove Theorem \ref{theorem:extremalvalue}.

\begin{proof}[Proof of Theorem \ref{theorem:extremalvalue}]
Fix an integer $W\geq 20$. Let $X$ be large enough, $s=\frac{\log D}{\log z}= 2.023$,  $D= X^{\frac{2.023}{W+0.5}}\leq X^{\frac{1}{10}-\varepsilon}$, and $z=X^{\frac{1}{W+0.5}}$.
Then we have $[\frac{\log X}{\log z}]=W$.
Let
\begin{equation}
  M=X^{\frac{W-19.73}{22W+12}}
  \leq X^{\frac{1}{22}-\varepsilon}
  D^{-\frac{5}{11}}
  =X^{\frac{W-19.73}{22W+11}-\varepsilon}.
\end{equation}
Thus by Lemma \ref{lemma:mean-value-1}, Lemma \ref{lemma:mean-value-0}, and Lemma \ref{lemma:sumofap2}, we have
\begin{align*}
    \max_{ \substack{d\in \Omega(a,\epsilon)
  \\ \frac{1}{2}X\leq |d| \leq \frac{5}{2}X \\ \omega(d)\leq W}}
  L\left(\frac{1}{2},E_d\right)
   & \geq \max_{ \substack{d\in \Omega(a,\epsilon)\\ \frac{1}{2}X\leq |d| \leq \frac{5}{2}X \\(d,P(z))=1}}
  L\left(\frac{1}{2},E_d\right) \\
  &
  \geq \bigg(\sum_{\substack{d\in \Omega(a,\epsilon)
  \\(d,P(z))=1}}
  L\left(\frac{1}{2},E_d\right)R^{+}(d)^2
  \Phi\left(\frac{\epsilon d}{X}\right)\bigg) \bigg/
  \bigg(\sum_{\substack{d\in \Omega(a,\epsilon)\\(d,P(z))=1}}
  R^{+}(d)^2
  \Phi\left(\frac{\epsilon d}{X}\right)\bigg)
  \\
  & \gg \prod_{\substack{\mathcal{L}^2\leq p\leq \exp((\log \mathcal{L})^2)}}
  \left(1+2a(p)b^{+}(p)\tilde{h}(p)p^{-\frac{1}{2}}\right) \\
  & \gg \exp\Big(2\mathcal{L}\sum_{\mathcal{L}^2\leq p\leq \exp((\log \mathcal{L})^2)}
\frac{a(p)^2}{p\log p}\Big)\\
  & =\exp\left(\left(2\sqrt{\frac{W-19.73}{22W+12}}
  +o(1)\right)\frac{\sqrt{\log X}}{\sqrt{\log\log X}}\right),
\end{align*}
and
\begin{align*}
    \min_{  \substack{d\in \Omega(a,\epsilon)
  \\ \frac{1}{2}X\leq |d| \leq \frac{5}{2}X \\ \omega(d)\leq W}}
  L\left(\frac{1}{2},E_d\right)
  & \leq \min_{  \substack{d\in \Omega(a,\epsilon)\\ \frac{1}{2}X\leq |d| \leq \frac{5}{2}X \\(d,P(z))=1}}
  L\left(\frac{1}{2},E_d\right) \\
  &
  \leq \bigg(\sum_{\substack{d\in \Omega(a,\epsilon)
  \\(d,P(z))=1}}
  L\left(\frac{1}{2},E_d\right)R^{-}(d)^2
  \Phi\left(\frac{\epsilon d}{X}\right)\bigg) \bigg/
  \bigg(\sum_{\substack{d\in \Omega(a,\epsilon)\\(d,P(z))=1}}
  R^{-}(d)^2
  \Phi\left(\frac{\epsilon d}{X}\right)\bigg)\\
  & \ll \prod_{\substack{\mathcal{L}^2\leq p\leq \exp((\log \mathcal{L})^2)}}
  \left(1+2a(p)b^{-}(p)\tilde{h}(p)p^{-\frac{1}{2}}\right) \\
  & \ll \exp\Big(-2\mathcal{L}\sum_{\mathcal{L}^2\leq p\leq \exp((\log \mathcal{L})^2)}
\frac{a(p)^2}{p\log p}\Big)\\
  & =\exp\left(-\left(2\sqrt{\frac{W-19.73}{22W+12}}
  +o(1)\right)\frac{\sqrt{\log X}}{\sqrt{\log\log X}}\right).
\end{align*}
This completes the proof of Theorem \ref{theorem:extremalvalue}.
\end{proof}

\section*{Acknowledgements}
The authors would like to thank Professors Xiannan Li, Jianya Liu, and Ping Xi for valuable advice.
They want to thank the referees for the constructive comments and suggestions, which definitely improve the readability and quality of the paper.

%%%%%%%%%%%%%%%%%%%%%%%%%%%%%%%%%%%%%%%%%%%%%%%%%%%%%%%%%%%%%%
%%%%%                      References                    %%%%%
%%%%%%%%%%%%%%%%%%%%%%%%%%%%%%%%%%%%%%%%%%%%%%%%%%%%%%%%%%%%%%
%\medskip

\end{document}